\UseRawInputEncoding 
\documentclass[12pt]{amsart}
\usepackage{mathrsfs}
\usepackage{}
\usepackage{amsmath}
\usepackage{amsfonts}
\usepackage{amssymb}
\usepackage[all,cmtip]{xy}           
\usepackage{shuffle}
\usepackage{caption}
\usepackage{bbding}
\usepackage{txfonts}
\usepackage[shortlabels]{enumitem}
\usepackage{ifpdf}
\ifpdf
\usepackage[colorlinks,final,backref=page,hyperindex]{hyperref}
\else
\usepackage[colorlinks,final,backref=page,hyperindex,hypertex]{hyperref}
\fi
\usepackage{tikz}
\usepackage[active]{srcltx}

\makeatletter

\newtheorem{theorem}{Theorem}[section]
\newtheorem{prop}[theorem]{Proposition}
\newtheorem{lemma}[theorem]{Lemma}
\newtheorem{coro}[theorem]{Corollary}
\newtheorem{prop-def}{Proposition-Definition}[section]

\theoremstyle{definition}
\newtheorem{defn}[theorem]{Definition}

\newtheorem{remark}[theorem]{Remark}
\newtheorem{exam}[theorem]{Example}

\newcommand{\nc}{\newcommand}

\newcommand {\emptycomment}[1]{}

\nc{\delete}[1]{{}}
\nc{\mmargin}[1]{}

\nc{\mlabel}[1]{\label{#1}}  
\nc{\mcite}[1]{\cite{#1}}  
\nc{\mref}[1]{\ref{#1}}  
\nc{\meqref}[1]{\eqref{#1}}  
\nc{\mbibitem}[1]{\bibitem{#1}} 

\newcommand{\g}{\mathfrak g}
\newcommand{\h}{\mathfrak h}

\newcommand{\bk}{{\mathbf{k}}}

\nc{\vep}{\varepsilon}
\nc{\btl}{\blacktriangleright}
\nc{\bin}[2]{ (_{\stackrel{\scs{#1}}{\scs{#2}}})}  
\nc{\binc}[2]{(\!\! \begin{array}{c} \scs{#1}\\
		\scs{#2} \end{array}\!\!)}  
\nc{\bincc}[2]{  ( {\scs{#1} \atop
		\vspace{-1cm}\scs{#2}} )}  
\nc{\oline}[1]{\overline{#1}}
\nc{\mapm}[1]{\lfloor\!|{#1}|\!\rfloor}
\nc{\bs}{\bar{S}}
\nc{\cast}{{\,\mbox{\raisebox{.8pt}{$\scriptstyle \circledast$}}\,}}
\nc{\la}{\longrightarrow}
\nc{\ot}{\otimes}
\nc{\rar}{\rightarrow}
\nc{\lda}{\leftharpoonup\!\!\!\!\!\leftharpoonup}
\nc{\rda}{\rightharpoonup\!\!\!\!\!\rightharpoonup}
\nc{\dar}{\downarrow}
\nc{\dap}[1]{\downarrow \rlap{$\scriptstyle{#1}$}}
\nc{\defeq}{\stackrel{\rm def}{=}}
\nc{\dis}[1]{\displaystyle{#1}}
\nc{\dotcup}{\ \displaystyle{\bigcup^\bullet}\ }
\nc{\hcm}{\ \hat{,}\ }
\nc{\hts}{\hat{\otimes}}
\nc{\hcirc}{\hat{\circ}}
\nc{\lleft}{[}
\nc{\lright}{]}
\nc{\curlyl}{\left \{ \begin{array}{c} {} \\ {} \end{array}
	\right .  \!\!\!\!\!\!\!}
\nc{\curlyr}{ \!\!\!\!\!\!\!
	\left . \begin{array}{c} {} \\ {} \end{array}
	\right \} }
\nc{\longmid}{\left | \begin{array}{c} {} \\ {} \end{array}
	\right . \!\!\!\!\!\!\!}
\nc{\ora}[1]{\stackrel{#1}{\rar}}
\nc{\ola}[1]{\stackrel{#1}{\la}}
\nc{\scs}[1]{\scriptstyle{#1}} \nc{\mrm}[1]{{\rm #1}}
\nc{\dirlim}{\displaystyle{\lim_{\longrightarrow}}\,}
\nc{\invlim}{\displaystyle{\lim_{\longleftarrow}}\,}
\nc{\dislim}[1]{\displaystyle{\lim_{#1}}} \nc{\colim}{\mrm{colim}}
\nc{\mvp}{\vspace{0.3cm}} \nc{\tk}{^{(k)}} \nc{\tp}{^\prime}
\nc{\ttp}{^{\prime\prime}} \nc{\svp}{\vspace{2cm}}
\nc{\vp}{\vspace{8cm}}
\nc{\modg}[1]{\!<\!\!{#1}\!\!>}
\nc{\intg}[1]{F_C(#1)}
\nc{\lmodg}{\!<\!\!}
\nc{\rmodg}{\!\!>\!}
\nc{\cpi}{\widehat{\Pi}}
\nc{\labs}{\mid\!}
\nc{\rabs}{\!\mid}
\nc{\btr}{\blacktriangleright}

\nc{\ad}{\mrm{ad}}
\nc{\rRB}{\mathsf{rRB}}
\nc{\cocrRB}{\mathsf{cocrRB}}
\nc{\coa}{\mrm{Coalg}}
\nc{\PH}{\mathsf{PH}}
\nc{\cocPH}{\mathsf{cocPH}}
\nc{\ann}{\mrm{ann}}
\nc{\Ad}{\mrm{Coad}}
\nc{\Aut}{\mrm{Aut}}
\nc{\Der}{\mrm{Der}}
\nc{\Sym}{\mrm{Sym}}
\nc{\br}{\mrm{bre}}
\nc{\can}{\mrm{can}}
\nc{\Cont}{\mrm{Cont}}
\nc{\rchar}{\mrm{char}}
\nc{\cok}{\mrm{coker}}
\nc{\de}{\mrm{dep}}
\nc{\dtf}{{R-{\rm tf}}}
\nc{\dtor}{{R-{\rm tor}}}

\nc{\Dif}{\mrm{Diff}}
\nc{\Div}{\mrm{Div}}
\nc{\End}{\mrm{End}}
\nc{\Ext}{\mrm{Ext}}
\nc{\Fil}{\mrm{Fil}}
\nc{\Fr}{\mrm{Fr}}
\nc{\Frob}{\mrm{Frob}}
\nc{\Gal}{\mrm{Gal}}
\nc{\GL}{\mrm{GL}}
\nc{\Gr}{\mrm{Gr}}
\nc{\Hom}{\mrm{Hom}}
\nc{\Hoch}{\mrm{Hoch}}
\nc{\hsr}{\mrm{H}}
\nc{\hpol}{\mrm{HP}}
\nc{\id}{\mrm{id}}
\nc{\im}{\mrm{im}}
\nc{\inv}{\mrm{inv}}
\nc{\Id}{\mrm{Id}}
\nc{\ID}{\mrm{ID}}
\nc{\Irr}{\mrm{Irr}}
\nc{\incl}{\mrm{incl}}
\nc{\length}{\mrm{length}}
\nc{\NLSW}{\mrm{NLSW}}
\nc{\Lie}{\mrm{Lie}}
\nc{\mchar}{\rm char}
\nc{\mpart}{\mrm{part}}
\nc{\ql}{{\QQ_\ell}}
\nc{\qp}{{\QQ_p}}
\nc{\rank}{\mrm{rank}}
\nc{\rcot}{\mrm{cot}}
\nc{\rdef}{\mrm{def}}
\nc{\rdiv}{{\rm div}}
\nc{\rtf}{{\rm tf}}
\nc{\rtor}{{\rm tor}}
\nc{\res}{\mrm{res}}
\nc{\SL}{\mrm{SL}}
\nc{\Spec}{\mrm{Spec}}
\nc{\tor}{\mrm{tor}}
\nc{\Tr}{\mrm{Tr}}
\nc{\tr}{\mrm{tr}}
\nc{\wt}{\mrm{wt}}

\nc{\bfk}{{\bf k}}
\nc{\bfone}{{\bf 1}}
\nc{\bfzero}{{\bf 0}}
\nc{\detail}{\marginpar{\bf More detail}
	\noindent{\bf Need more detail!}
	\svp}
\nc{\gap}{\marginpar{\bf Incomplete}\noindent{\bf Incomplete!!}
	\svp}
\nc{\FMod}{\mathbf{FMod}}
\nc{\Int}{\mathbf{Int}}
\nc{\Mon}{\mathbf{Mon}}
\nc{\remarks}{\noindent{\bf Remarks: }}
\nc{\Rep}{\mathbf{Rep}}
\nc{\Rings}{\mathbf{Rings}}
\nc{\Sets}{\mathbf{Sets}}
\nc{\Diff}{\mathbf{Diff}}
\nc{\Inte}{\mathbf{Inte}}
\nc{\U}{\mathrm{U}}
\newcommand{\YD}{{\mathcal{Y}\mathcal{D}}}

\nc{\BA}{{\mathbb A}}   \nc{\CC}{{\mathbb C}}
\nc{\DD}{{\mathbb D}}   \nc{\EE}{{\mathbb E}}
\nc{\FF}{{\mathbb F}}   \nc{\GG}{{\mathbb G}}
\nc{\HH}{{\mathbb H}}   \nc{\LL}{{\mathbb L}}
\nc{\NN}{{\mathbb N}}   \nc{\PP}{{\mathbb P}}
\nc{\QQ}{{\mathbb Q}}   \nc{\RR}{{\mathbb R}}
\nc{\TT}{{\mathbb T}}   \nc{\VV}{{\mathbb V}}
\nc{\ZZ}{{\mathbb Z}}   \nc{\TP}{\widetilde{P}}

\nc{\cala}{{\mathcal A}}    \nc{\calb}{{\mathcal B}}
\nc{\calc}{{\mathcal C}}
\nc{\cald}{\mathcal{D}}     \nc{\cale}{{\mathcal E}}
\nc{\calf}{{\mathcal F}}    \nc{\calg}{{\mathcal G}}
\nc{\calh}{{\mathcal H}}    \nc{\cali}{{\mathcal I}}
\nc{\call}{{\mathcal L}}    \nc{\calm}{{\mathcal M}}
\nc{\caln}{{\mathcal N}}    \nc{\calo}{{\mathcal O}}
\nc{\calp}{{\mathcal P}}    \nc{\calr}{{\mathcal R}}

\nc{\cals}{{\mathcal S}}    \nc{\calt}{{\Omega}}
\nc{\calv}{{\mathcal V}}    \nc{\calw}{{\mathcal W}}
\nc{\calx}{{\mathcal X}}

\nc{\fraka}{{\mathfrak a}}
\nc{\frakb}{\mathfrak{b}}
\nc{\frakg}{{\frak g}}
\nc{\frakl}{{\frak l}}
\nc{\fraks}{{\frak s}}
\nc{\frakB}{{\frak B}}
\nc{\frakm}{{\frak m}}
\nc{\frakM}{{\frak M}}
\nc{\frakp}{{\frak p}}
\nc{\frakW}{{\frak W}}
\nc{\frakX}{{\frak X}}
\nc{\frakS}{{\frak S}}
\nc{\frakA}{{\frak A}}
\nc{\frakx}{{\frakx}}

\nc{\ynr}[1]{\textcolor{orange}{\underline{Yunnan:}#1 }}

\nc{\lir}[1]{\textcolor{red}{\underline{Li:}#1 }}

\begin{document}

\title[Double cross products with projections]{Double cross products with projections and relative Rota-Baxter operators on Hopf algebras}

\author{Yunnan Li}
\address{School of Mathematics and Information Science, Guangzhou University,
Guangzhou 510006, China}
\email{ynli@gzhu.edu.cn}

\begin{abstract}
Given a matched pair of Hopf algebras, the double cross product construction yields a new Hopf algebra, the prototypical example of which is the generalized quantum double arising from a (Hopf) skew pairing. In this paper, we specifically investigate double cross products with projections in the framework of Radford's biproduct theory,
and establish that such an algebraic structure is equivalent to a simplified version of (Yetter-Drinfeld) relative Rota-Baxter operators on Hopf algebras recently introduced by Sciandra, with the corresponding braided Hopf algebras lying in the Yetter-Drinfeld module category under coadjoint coactions.
We further examine conditions under which this algebraic structure induces a matched pair of actions on a single Hopf algebra, which is in turn equivalent to the notion of braiding operators on Hopf algebras that yield solutions of the braid equation. In addition, we show that such relative Rota-Baxter operators are morphisms between certain Doi-Hopf modules.
\end{abstract}

\keywords{double cross product, matched pair, Yetter-Drinfeld module, relative Rota-Baxter operator
\\
\qquad 2020 Mathematics Subject Classification. 16T05, 17B38, 18M15}

\maketitle

\tableofcontents

\allowdisplaybreaks

\section{Introduction}

The concept of a matched pair of Hopf algebras was first introduced by Singer in the graded case in 1972. This idea was later extended to the ungraded case by Takeuchi in \cite{Ta}, where he also introduced the notion of a matched pair of groups. Subsequently, Majid presented the modern definition of a matched pair of Hopf algebras \cite{Ma3,Ma}, imposing an extra cocommutativity condition to guarantee the double cross product bialgebra structure (also called a bicrossproduct).
A typical example of double cross products is the generalized quantum double construction via (Hopf) skew pairings, which includes the classical Drinfeld double of a finite dimensional Hopf algebra.

In \cite{AGV} Angiono, Galindo, and Vendramin  introduced the notion of Hopf braces, and demonstrated that cocommutative Hopf braces are equivalent to matched pairs of cocommutative Hopf algebras with compatible actions, yielding Yang-Baxter operators.
Removing the cocommutativity hypothesis from \cite{AGV}, Ferri and Sciandra \cite{FS} recently introduced the notion of a matched pair of actions, defined as a matched pair of a Hopf algebra and itself equipped with compatible actions, and ingeniously discovered its equivalent structure, the so-called Yetter-Drinfeld brace. This generalizes the construction of braided groups due to Lu, Yan and Zhu~\cite{LYZ}, as well as cocommutative Hopf braces in \cite{AGV}. Also, it was pointed out in \cite{GGV1} that a braiding operator on a Hopf algebra $H$, equivalently, a matched pair of actions on $H$ or the induced Yetter-Drinfeld brace $(H_\rightharpoonup,H)$, provides a set-theoretic type solution of the braid equation. Later, we showed in \cite{Li} that a double cross product $H\bowtie H$ of a Hopf algebra $H$ and itself, equipped with a projection onto $H$, gives another structure equivalent to the aforementioned ones. Consequently, it is natural to ask, in general, what algebraic structure arises from a double cross product $K\bowtie H$ of two Hopf algebras $K$ and $H$ with a projection $\theta$ onto $H$. This is the main problem we address in the present paper.

On the other hand, Rota-Baxter operators (of weight 0) on a Lie algebra $\g$ were introduced by  Semenov-Tian-Shansky in~\cite{STS} as the operator form of tensor solutions of the classical Yang-Baxter equation on $\g$, while Kupershmidt~\cite{Ku} formulated the relative version in the presence of a representation of $\g$, now commonly referred to as a relative Rota-Baxter operator or an $O$-operator. Since then, relative Rota-Baxter operators have been defined for various algebraic structures, such as associative algebras~\cite{Uc}, Leibniz algebras~\cite{TS} and (Lie) groups~\cite{JSZ}.

The notion of relative Rota-Baxter operators on (cocommutative) Hopf algebras was originally introduced by the present author together with Sheng and Tang in \cite{LST}. It generalizes Goncharov's Rota-Baxter operators on cocommutative Hopf algebras~\cite{Go}, inspired by the innovative work on Rota-Baxter groups~\cite{GLS}. Subsequently, Sciandra \cite{Sc} proposed the notion of
Yetter-Drinfeld relative Rota-Baxter operators on an arbitrary Hopf algebra $H$ with respect to a bialgebra in the category ${_H^H}\YD$ of Yetter-Drinfeld modules over $H$, rather than with respect to a cocommutative $H$-module bialgebra as in \cite{LST}.
Since then, numerous works have investigated relative Rota-Baxter operators on Hopf algebras from various perspectives; see e.g.~\cite{VRP,VRP1,ZD}.

In this paper, we first show that a double cross product $K\bowtie H$ with a projection $\theta$ onto $H$ is equivalently characterized by a matched pair of Hopf algebras $(H,K,\rightharpoonup,\leftharpoonup)$ together with a Hopf algebra homomorphism $\pi:K\to H$
satisfying a crucial commutativity condition. In the special case of a matched pair of actions on $H$, this reduces to be the braided-commutative algebra condition of $H$ with respect to the corresponding braiding operator.
Furthermore, by Radford's biproduct (or Majid's bosonization) theory, it is well known that the subalgebra of coinvariants of $\theta$ in $K\bowtie H$ is a braided Hopf algebra in the category ${_H^H}\YD$.
Of particular interest is the fact that this braided object is isomorphic to $K$ as coalgebras. Consequently, we can transmute the Hopf algebra $K$ into a braided Hopf algebra in ${_H^H}\YD$, denoted by $K_\pi$, while preserving its original coalgebra structure.
Such a framework generalizes Majid's transmutation process for coquasitriangular Hopf algebras~\cite{Ma1}.

Next, for these transmutations $K_\pi$, we find that the maps $\pi:K_\pi \to H$ give rise to a class of (Yetter-Drinfeld) relative Rota-Baxter operators, where the coaction of $H$ on $K_\pi$ is given by the coadjoint coaction. Accordingly, we introduce a suitable class of relative Rota-Baxter operators on Hopf algebras, two subclasses of which are the original one introduced in \cite[Definition~3.1]{LST} under the cocommutativity restriction, and the one mentioned in \cite[Proposition~4.12]{Sc} under the bijectivity constraint. Notably, this is precisely the notion equivalent to double cross products with projections.

Finally, we investigate the condition under which a pair of actions $(\rightharpoonup_\pi,\leftharpoonup_\pi)$ on $K$ induced by $\pi:K\to H$ constitutes a matched pair of actions on $K$. Meanwhile, we prove that the map $\pi:(K,\leftharpoonup_\pi,(\pi\otimes\id_K)\Delta_K)\to(H,\leftharpoonup,\Delta_H)$ is exactly a morphism of Doi-Hopf modules.

The paper is organized as follows. In Section~\ref{sec:mp}, we recall the main notions used throughout the paper, including matched pairs of Hopf algebras and double cross products, (pre-)braided tensor categories of Yetter-Drinfeld modules with braided Hopf algebras therein, as well as Doi-Hopf modules.
In Section~\ref{sec:proj}, we first establish a fundamental criteria for when a matched pair of Hopf algebras gives rises to a double cross product with a projection~(Proposition~\ref{prop:dcp-hopf-map}).
We then apply Radford's biproduct theory to a double cross product $K\bowtie H$ with a projection $\theta$ onto $H$, and transplant the braided Hopf algebra structure of its subalgebra of coinvariants of $\theta$ in ${_H^H}\YD$ to the coalgebra $K$~(Theorem~\ref{thm:dcp-yd-hopf}). As typical examples of double cross products, we study the generalized quantum doubles to illustrate this framework, with particular emphasis on (co)quasitriangular Hopf algebras.
In Section~\ref{sec:rrb}, we first introduce an appropriate notion of relative Rota-Baxter operators on Hopf algebras, and then prove that they can be induced by double cross products with projections~(Theorem~\ref{thm:dcp-rrb}). Conversely, we show that such relative Rota-Baxter operators on Hopf algebras give rise to double cross products with projections~(Theorem~\ref{thm:rrb-dcp}).
In Section~\ref{sec:mpa}, we investigate the condition under which a double cross product with a projection induces a matched pair of actions on a single Hopf algebra~(Theorem~\ref{thm:dcp-mpa}). Also, we show that the relative Rota-Baxter operators arisen from double cross products with projections turn out to be morphisms of Doi-Hopf modules~(Theorem~\ref{thm:doi-hopf} and Corollary~\ref{coro:doi-hopf}).

\vspace{2mm}

\noindent
{\bf Convention.}
In this paper, we fix an arbitrary ground field $\bk$.
All the objects under discussion, including vector spaces, algebras and tensor products, are taken over $\bk$ by default.

For any unital algebra $(A,\mu,u)$ with multiplication $\mu$ and the unit map $u:\bk\to A$, let $\mu^{(0)}=\id$ and for $n\geq1$ we write
$$\mu^{(n)}=(\mu\otimes \id^{\otimes(n-1)})\cdots (\mu\otimes\id)\mu.$$

For any coalgebra $(C,\Delta,\vep)$, we compress the Sweedler notation of the comultiplication $\Delta$ as $$\Delta(x)=x_1\otimes x_2$$ for simplicity.
Furthermore, let $\Delta^{(0)}=\id$ and for $n\geq1$ we write
$$\Delta^{(n)}(x)=(\Delta\otimes\id^{\otimes (n-1)})\cdots(\Delta\otimes\id)\Delta(x)=x_1\otimes\cdots\otimes x_{n+1}.$$

By convention, a Hopf algebra is denoted by $H=(H,\cdot\,,1,\Delta,\vep,S)$.
Denote by $G(H)$ the set of group-like elements in $H$, which is a group.
For other basic notions of Hopf algebras, we follow the textbook~\mcite{Mon}.

\section{Basic notions}\label{sec:mp}

First we recall the notion of matched pairs of Hopf algebras
in Majid's book~\cite{Ma}.
\begin{defn}[{\cite[\S~7.2]{Ma}}]\label{defn:MP_Hopf}
A {\bf matched pair of Hopf algebras} is a quadruple $(H,K,\rightharpoonup,\leftharpoonup)$, where $H$ and $K$
are Hopf algebras, $\rightharpoonup:H\otimes K\to K$ is a left $H$-module coalgebra action on $K$, $\leftharpoonup:H\otimes K\to H$ is a right $K$-module coalgebra action on $H$, namely $\rightharpoonup$ and $\leftharpoonup$ are coalgebra maps, such that\vspace{-.5em}
\begin{eqnarray}
\label{eq:MP1}
x\rightharpoonup ab&=&(x_1\rightharpoonup a_1)((x_2\leftharpoonup a_2)\rightharpoonup b),\\
\label{eq:MP2}
x\rightharpoonup 1_K&=&\varepsilon_H(x)1_K,\\
\label{eq:MP3}
xy\leftharpoonup a&=&(x\leftharpoonup(y_1\rightharpoonup a_1))(y_2\leftharpoonup a_2),\\
\label{eq:MP4}
1_H\leftharpoonup a&=&\varepsilon_K(a)1_H,\\
\label{eq:MP5}
(x_1\rightharpoonup a_1) \otimes (x_2\leftharpoonup a_2)  &=&
(x_2\rightharpoonup a_2) \otimes (x_1\leftharpoonup a_1)
\end{eqnarray}
for any $x,y\in H$ and $a,b\in K$.

For a matched pair of Hopf algebras $(H,K,\rightharpoonup,\leftharpoonup)$, the {\bf double cross product} $K\bowtie H$ is a Hopf algebra structure on $K\otimes H$ equipped with the product
\begin{eqnarray}\label{eq:dcp}
(a\otimes x)(b\otimes y) &\coloneqq& a(x_1\rightharpoonup b_1) \otimes (x_2\leftharpoonup b_2)y,\quad\forall x,y\in H,\ a,b\in K,
\end{eqnarray}
and the usual tensor coproduct. The antipode of $K\bowtie H$ is given by
\begin{eqnarray}\label{eq:dcp-anti}
S_{\bowtie}(a\otimes x) &=& (1\otimes S_H(x))(S_K(a)\otimes 1)\ =\ (S_H(x_1)\rightharpoonup S_K(a_1))\otimes(S_H(x_2)\leftharpoonup S_K(a_2)).
\end{eqnarray}
\end{defn}

By \cite[Proposition~21.6]{Maj1}, we have the following characterization of double cross products via factorization.
\begin{prop}\label{prop:MP_Hopf}
With the notations in Definition~\ref{defn:MP_Hopf}, $(H,K,\rightharpoonup,\leftharpoonup)$ is a matched pair of Hopf algebras if and only if there exist a Hopf algebra $A$ and injective Hopf algebra homomorphisms $\iota_K:K\to A$, $\iota_H:H\to A$ such that
the map
$$\xi:K\otimes H\to A,\,a\otimes x\mapsto \iota_K(a)\iota_H(x)$$
is a linear isomorphism. Namely, $A$ is a Hopf algebra factorization into the two Hopf subalgebras $\iota_K(K),\ \iota_H(H)$ and $A\cong K\bowtie H$.
\end{prop}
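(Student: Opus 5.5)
Both directions go through the double cross product $K\bowtie H$ of Definition~\ref{defn:MP_Hopf}, so the plan is to treat the forward direction as a verification and the converse as a reconstruction of the actions from the factorization.

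\emph{Forward direction.} Assume $(H,K,\rightharpoonup,\leftharpoonup)$ is a matched pair and take $A=K\bowtie H$ with $\iota_K(a)=a\otimes 1$ and $\iota_H(x)=1\otimes x$. The main work is to show that the product \eqref{eq:dcp} is associative and unital, and that it is compatible with the tensor coproduct.
\begin{itemize}
\item Associativity: expand $((a\otimes x)(b\otimes y))(c\otimes z)$ and $(a\otimes x)((b\otimes y)(c\otimes z))$. Use \eqref{eq:MP1} to move $x$ past the product $bc$, and \eqref{eq:MP3} to move the product $xy$ past $c$. Both actions are module actions and coalgebra maps, which lets the Sweedler indices be regrouped.
\item Unit: $1\otimes 1$ is the unit by \eqref{eq:MP2} and \eqref{eq:MP4}.
\item Multiplicativity of $\Delta$: this is where the cocommutativity-type condition \eqref{eq:MP5} is used. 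It swaps the order of the tensor factors $(x\rightharpoonup b)$ and $(x\leftharpoonup b)$ across the two legs.
\item Antipode: check that \eqref{eq:dcp-anti} is a convolution inverse of the identity.
\end{itemize}
Once this is done, $\iota_K$ and $\iota_H$ are injective Hopf maps (counit arguments), and $\xi$ is the identity map. The identity $(a\otimes 1)(1\otimes x)=a\otimes x$ follows from \eqref{eq:MP2} and \eqref{eq:MP4}.

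\emph{Converse direction.} Suppose we are given $A$, $\iota_K$, $\iota_H$ with $\xi$ bijective. Define the two actions through
\[
\iota_H(x)\iota_K(a)=\xi\bigl((x_1\rightharpoonup a_1)\otimes(x_2\leftharpoonup a_2)\bigr).
\]
More precisely, let $m(x\otimes a)=\xi^{-1}(\iota_H(x)\iota_K(a))$, and set
\[
x\rightharpoonup a=(\id\otimes\vep_H)m(x\otimes a),\qquad x\leftharpoonup a=(\vep_K\otimes\id)m(x\otimes a).
\]
The key lemma is that $\xi$ is a coalgebra isomorphism when $K\otimes H$ carries the tensor coproduct. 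This holds because $\xi$ is the composite of $\iota_K\otimes\iota_H$ with $\mu_A$, and $\mu_A$ is a coalgebra map. Since $m$ is then a coalgebra map, it follows that:
\begin{itemize}
\item $m(x\otimes a)=(x_1\rightharpoonup a_1)\otimes(x_2\leftharpoonup a_2)$;
\item both actions are coalgebra maps;
\item \eqref{eq:MP5} holds, because $m$ commutes with $\Delta$ and we may apply $\vep$ to different legs.
\end{itemize}
The module axioms and \eqref{eq:MP1}--\eqref{eq:MP4} follow from associativity and unitality in $A$. For example, \eqref{eq:MP1} comes from expanding $\iota_H(x)\iota_K(ab)$ in two ways and then applying $\xi^{-1}$ and $\id\otimes\vep$. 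Finally, $\xi$ becomes a Hopf algebra isomorphism $K\bowtie H\to A$.

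\emph{Main obstacle.} I expect the hardest step to be the forward-direction check that $\Delta$ is multiplicative and that the actions interact correctly, since the Sweedler bookkeeping there is heavy. In the converse, the point needing most care is extracting the coalgebra-map property of the actions from $m$. Since the statement is attributed to \cite[Proposition~21.6]{Maj1}, I would cite that result for the routine identities and write out only the structural steps above.
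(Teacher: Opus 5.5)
Your proposal is correct and follows the route the paper relies on. The paper cites \cite[Proposition~21.6]{Maj1} for the statement, and right after Definition~\ref{defn:dcp_proj} it recovers the actions exactly as you do: it applies $(\id_K\otimes\varepsilon_H)$ and $(\varepsilon_K\otimes\id_H)$ to $\iota_H(x)\iota_K(a)$ and uses that $x\otimes a\mapsto\iota_H(x)\iota_K(a)$ is a coalgebra map to get $\iota_H(x)\iota_K(a)=(x_1\rightharpoonup a_1)\otimes(x_2\leftharpoonup a_2)$, while your remaining checks are sound (associativity via \eqref{eq:MP1} and \eqref{eq:MP3}, multiplicativity of $\Delta$ via \eqref{eq:MP5}, and \eqref{eq:MP5} from $\Delta m=(m\otimes m)\Delta$ by applying counits to different legs).
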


In the recent work~\cite{FS}, Ferri and Sciandra particularly considered a certain subclass of matched pairs of Hopf algebras, then introduced two equivalent notions, matched pair of actions on a Hopf algebra and Yetter-Drinfeld brace.
\begin{defn}[\cite{FS}]\label{defn:mpa}
A {\bf matched pair of actions} on a Hopf algebra $H$ is a matched pair of Hopf algebras $(H,H,\rightharpoonup,\leftharpoonup)$ satisfying
\begin{eqnarray}
\label{eq:MP*}
xy&=&(x_1\rightharpoonup y_1)(x_2\leftharpoonup y_2),\quad\forall x,y\in H.
\end{eqnarray}
It will be abbreviated as $(H,\rightharpoonup,\leftharpoonup)$.
\end{defn}

Next we recall the notion of Yetter-Drinfeld modules and Hopf algebras in such a category.
\begin{defn}
A (left-left) {\bf Yetter-Drinfeld module} over a Hopf algebra $H$ is a left $H$-module $M$ which is also a left $H$-comodule satisfying
\begin{equation}\label{eq:yd-mod}
\rho(x\cdot m)=x_1m_{-1}S(x_3)\otimes (x_2\cdot m_0),\quad \forall x\in H,\ m\in M,
\end{equation}
where $\rho:M\to H\otimes M$ is the coaction map under the notation $\rho(m)=m_{-1}\otimes m_0$ and $\cdot$ denotes the left $H$-action on $M$.
A morphism of Yetter-Drinfeld modules over $H$ is a left $H$-module and also left $H$-comodule map. Denote by ${_H^H}\YD$ the (pre-)braided tensor category of Yetter-Drinfeld modules over $H$. The tensor product of two Yetter-Drinfeld modules $M,N$ over $H$ has the following action and coaction of $H$ respectively,
$$x\cdot(m\otimes n)=(x_1\cdot m)\otimes (x_2\cdot n),\quad \rho(m\otimes n)=m_{-1}n_{-1}\otimes (m_0\otimes n_0).$$
The (pre-)braiding $c$ of ${_H^H}\YD$ is given by
$$c_{M,N}(m\otimes n)=(m_{-1}\cdot n)\otimes m_0.$$

For a (braided) Hopf algebra $(L,\mu_L,1,\Delta_L,\varepsilon_L,S_L)$ in the braided tensor category ${_H^H}\YD$, it satisfies the compatibility condition $\Delta_L\mu_L=(\mu_L\otimes \mu_L)(\id_L\otimes c_{L,L}\otimes \id_L)(\Delta_L\otimes \Delta_L)$, namely
\begin{equation}\label{eq:yd-hopf}
(ab)_1\otimes (ab)_2=a_1((a_2)_{-1}\cdot b_1)\otimes (a_2)_0b_2,\quad\forall a,b\in L.
\end{equation}
Also, all its structure maps are morphisms in ${_H^H}\YD$.
In particular, $S_L\mu_L=\mu_Lc_{L,L}(S_L\otimes S_L)$, namely
$$S_L(ab)=S_L(a_{-1}\cdot b)S_L(a_0)=(a_{-1}\cdot S_L(b))S_L(a_0),$$
and $\Delta_L S_L=(S_L\otimes S_L)c_{L,L}\Delta_L$, namely
\begin{equation}\label{eq:yd-antipode}
S_L(a)_1\otimes S_L(a)_2=S_L((a_1)_{-1}\cdot a_2)\otimes S_L((a_1)_0)
=((a_1)_{-1}\cdot S_L(a_2)) \otimes S_L((a_1)_0).
\end{equation}
\end{defn}

Also, we need the notion of Doi-Hopf modules~\cite{Doi}, unifying several kinds of modules, e.g. (relative) Yetter-Drinfeld modules and Hopf modules.
\begin{defn}
Given a Hopf algebra $H$, a left $H$-comodule algebra $A$ and a right $H$-module coalgebra $C$, a (right-left) $(H,A,C)$-{\bf Doi-Hopf module} $M$ is a right $A$-module $(M,\cdot)$ and also a left $C$-comodule $(M,\rho)$, such that
\begin{equation}\label{eq:doi-hopf}
\rho(m\cdot a)=(m_{-1}\leftharpoonup a_{-1})\otimes (m_0\cdot a_0),\quad \forall a\in A,\ m\in M,
\end{equation}
where we use $\leftharpoonup$ to denote the right $H$-action on $C$.
A morphism of $(H,A,C)$-Doi-Hopf modules is a right $A$-module and also left $C$-comodule map.
\end{defn}

\section{Double cross products with projections}\label{sec:proj}
In this section, we focus on the structure of a double cross product $K\bowtie H$ with a Hopf algebra projection onto $H$ for later discussion. Based on the biproduct or bosonization theory, we especially obtain a braided Hopf algebra in the category of Yetter-Drinfeld modules over $H$ as a transmutation of $K$.
Also, we illustrate this framework by the typical example from generalized quantum doubles.

\begin{defn}[\cite{Ra,Ma0}]
Given a braided Hopf algebra $B$ in the category ${_H^H}\YD$ of Yetter-Drinfeld modules over a Hopf algebra $H$, there is an ordinary Hopf algebra structure on the tensor product $B\otimes H$, called the {\bf biproduct} or {\bf bosonization} of $B$ by $H$.
It is the smash product Hopf algebra $B\# H$ equipped with the multiplication
\begin{eqnarray}\label{eq:bp_mul}
(a\otimes x)(b\otimes y)&\coloneqq&a(x_1 \cdot b)\otimes x_2y,\quad \forall a,b\in B,\,x,y\in H,
\end{eqnarray}
and the following coproduct
\begin{eqnarray}\label{eq:bp_co}
\Delta(a\otimes x)&\coloneqq&(a^1\otimes (a^2)_{-1}x_1)\otimes ((a^2)_0\otimes x_2),
\end{eqnarray}
where the coproduct of $B$ is written as $\Delta_B(a)=a^1\otimes a^2$. The unit and the counit of $B\# H$ are the trivial ones.
\end{defn}

Conversely, we have the following result by Radford's biproduct theory (\cite[Theorem~3]{Ra}). See also \cite[\S 1.5]{AS}.
\begin{prop}\label{prop:biproduct}
Given Hopf algebras $A$ and $H$, if there are Hopf algebra homomorphisms
$\xymatrix{A \ar@<0.5ex>[r]^{\theta}& H  \ar@<0.5ex>[l]^{\iota}}$
 such that $\theta\iota=\id_H$, then the subalgebra of coinvariants of $\theta$
$$B\coloneqq A^{{\rm co}\,\theta}=\{a\in A\,|\,a_1\otimes \theta(a_2)=a\otimes 1_H\}=\{a_1\iota(S_H(\theta(a_2)))\,|\,a\in A\}$$
is a braided Hopf algebra in ${_H^H}\YD$ with respect to the coproduct
\begin{equation}\label{eq:braid-co}
\Delta_B(a)=a_1\iota(S_H(\theta(a_2)))\otimes a_3,
\end{equation}
the counit $\vep_B=\vep_A\big|_B$, the antipode
\begin{equation}\label{eq:braid-anti}
S_B(a)=\iota(\theta(a_1))S_A(a_2),
\end{equation}
and the action $\cdot$ and the coaction $\rho$ as follows,
\begin{equation}\label{eq:braid-action}
x\cdot a= \iota(x_1)a\iota(S_H(x_2)),\quad \rho(a)=\theta(a_1)\otimes a_2,\quad \forall a\in B,\,x\in H.
\end{equation}
Moreover, there is a Hopf algebra isomorphism
\begin{equation}\label{eq:braid-iso}
\Theta:A\to B\# H,\ a\mapsto a_1\iota(S_H(\theta(a_2)))\otimes \theta(a_3),
\end{equation}
where $B\# H$ is the biproduct of $B$ by $H$.
\end{prop}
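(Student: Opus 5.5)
The plan is to verify the structures in the order the statement lists them, using only Hopf algebra identities in $A$ together with $\theta\iota=\id_H$. First, the two descriptions of $B$ coincide. If $a\in A^{{\rm co}\,\theta}$, then applying $\id\otimes\vep$ shows $a=a_1\iota(S_H\theta(a_2))$, using $a\otimes1=a_1\otimes\theta(a_2)$. Conversely, for an arbitrary $a$ put $E(a)=a_1\iota(S_H\theta(a_2))$. Then
$$E(a)_1\otimes\theta(E(a)_2)=a_1\iota(S\theta(a_4))\otimes\theta(a_2)S\theta(a_3)=E(a)\otimes 1,$$
so $E(a)$ is coinvariant. Hence $E:A\to B$ is a projection, and $B$ is a subalgebra because $\theta$ is an algebra map.

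Next I check that the action, coaction and coproduct land where they should. For $x\cdot a=\iota(x_1)a\iota(S x_2)$, coinvariance of $x\cdot a$ follows because $\theta\iota=\id$ makes the $\iota(x)$-factors cancel in the $\theta$-leg. For $\rho(a)=\theta(a_1)\otimes a_2$, I note $a_2\in B$ via coassociativity and the coinvariance of $a$. For $\Delta_B(a)=E(a_1)\otimes a_2$, I must show $a_2\in B$ after the $E$ on the left leg; I would write $\Delta_B(a)=(E\otimes\id)\Delta_A(a)$ and use $a_1\otimes a_2\otimes\theta(a_3)=a_1\otimes a_2\otimes1$. Coassociativity of $\Delta_B$ is then a direct Sweedler computation, as are the counit property and the Yetter--Drinfeld condition \eqref{eq:yd-mod}. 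In the latter, $\rho(x\cdot a)=\theta(x_1)\theta(a_1)S\theta(x_3)\otimes x_2\cdot a_2$, using $\theta\iota=\id$ exactly.

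The central step, and the main obstacle, is the braided compatibility \eqref{eq:yd-hopf} together with the requirement that $\mu_B$ and $\Delta_B$ are morphisms in ${_H^H}\YD$. To keep this manageable I would avoid direct expansion of \eqref{eq:yd-hopf}. Instead I would first prove that $\Theta$ in \eqref{eq:braid-iso} is a linear bijection with inverse $b\otimes x\mapsto b\,\iota(x)$; this is immediate from $E(a_1)\iota\theta(a_2)=a$. I then check that $\Theta$ is multiplicative for the smash product \eqref{eq:bp_mul}. This reduces to the identity $\iota(x)b=(x_1\cdot b)\iota(x_2)$, so $\Theta$ transports the algebra structure, and it sends the coproduct of $A$ to formula \eqref{eq:bp_co}. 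The latter amounts to writing
$$a_1\otimes a_2=E(a_1)\iota\theta(a_2)\otimes E(a_3)\iota\theta(a_4)$$
and rearranging. The biproduct coproduct \eqref{eq:bp_co} is then an algebra map because $\Delta_A$ is. Restricting to $B\otimes 1$ and projecting with $\id\otimes\vep$ on appropriate legs recovers \eqref{eq:yd-hopf}, so I expect the real work to lie in keeping this projection argument clean.

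Finally, I verify that $S_B(a)=\iota\theta(a_1)S_A(a_2)$ lies in $B$ and is a convolution inverse of $\id_B$ with respect to $\mu_B$ and $\Delta_B$. This follows from
$$E(a_1)S_B(a_2)=a_1\iota S\theta(a_2)\iota\theta(a_3)S_A(a_4)=\vep(a)1.$$
Once $B$ is known to be a braided Hopf algebra, $B\# H$ is a Hopf algebra and $\Theta$ is a bialgebra isomorphism, hence a Hopf algebra isomorphism.
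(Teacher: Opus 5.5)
The paper does not prove this proposition itself; it quotes it from Radford \cite[Theorem~3]{Ra} and \cite[\S 1.5]{AS}. Your plan is essentially Radford's own argument, and the computations you sketch are correct. It has three steps:
\begin{itemize}
\item realize $B$ as the image of the idempotent $E(a)=a_1\iota(S_H\theta(a_2))$;
\item show that $\Theta$ is a bijection carrying the product and coproduct of $A$ to \eqref{eq:bp_mul} and \eqref{eq:bp_co};
\item read off \eqref{eq:yd-hopf} by applying $\id\otimes\vep_H\otimes\id\otimes\vep_H$ to the multiplicativity of the transported coproduct on $(b\otimes 1)(b'\otimes 1)$.
\end{itemize}
Three points need repair, the second being a real omission.

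\textbf{Two slips.} Applying $\id\otimes\vep_H$ to $a_1\otimes\theta(a_2)=a\otimes 1$ only gives $a=a$. To obtain $a=a_1\iota(S_H\theta(a_2))$ for $a\in B$ you must apply $\mu_A(\id\otimes\iota S_H)$. Likewise, $E(a_1)\iota\theta(a_2)=a$ only gives $\Psi\circ\Theta=\id_A$ for $\Psi(b\otimes x)=b\,\iota(x)$. The other composite $\Theta\circ\Psi=\id_{B\otimes H}$ needs $E(c\,\iota(h))=\vep(h)E(c)$ together with $b_1\otimes\theta(b_2)=b\otimes1$.

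\textbf{The antipode.} You check only $E(a_1)S_B(a_2)=\vep(a)1$, i.e.\ that $S_B$ is a right convolution inverse of $\id_B$. A one-sided convolution inverse need not be two-sided in general, so $S_B(E(a_1))\,a_2=\vep(a)1$ must also be verified. It is short. For every $c\in A$ one has $\Delta_A(E(c))=c_1\iota(S_H\theta(c_4))\otimes c_2\iota(S_H\theta(c_3))$, hence
\[
\iota\theta(E(c)_1)\,S_A(E(c)_2)=\iota\big(\theta(c_1)\,S_H(\theta(c_4))\,S_H^2(\theta(c_3))\big)S_A(c_2)=\iota\theta(c_1)S_A(c_2).
\]
So for $a\in B$,
\[
S_B(E(a_1))a_2=\iota\theta(a_1)S_A(a_2)a_3=\iota\theta(a)=\vep(a)1,
\]
using $\theta(a)=\vep(a)1_H$, which follows from coinvariance.

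\textbf{Morphism conditions.} You list the requirement that $\mu_B$ and $\Delta_B$ be morphisms in ${_H^H}\YD$ as part of the central step, but your $\Theta$-argument only delivers \eqref{eq:yd-hopf}. These checks are routine:
\begin{itemize}
\item For $\mu_B$, use $x\cdot(ab)=(x_1\cdot a)(x_2\cdot b)$ and the multiplicativity of $\theta$.
\item For $\Delta_B$, use $E(\iota(y)\,c\,\iota(z))=\vep(z)\,y\cdot E(c)$ and $\rho(E(c))=\theta(c_1)S_H(\theta(c_3))\otimes E(c_2)$ for $c\in A$.
\item Alternatively, linearity of $\Delta_B$ falls out of your transport argument by projecting the multiplicativity of the coproduct on $(1\otimes x)(b\otimes 1)$, and colinearity by projecting its coassociativity.
\item Linearity and colinearity of $S_B$ are similar one-line checks.
\end{itemize}
With these additions the proof is complete.
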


\begin{defn}\label{defn:dcp_proj}
Let $K,H$ and $A$ be Hopf algebras, where the underlying coalgebra of $A$ is the tensor coalgebra $K\otimes H$ and the following conditions hold:
\begin{enumerate}[(i)]
\item\label{con:dcp-1}
The natural inclusions $\iota_K:K\to A,\ a\mapsto a\otimes 1$ and $\iota_H:H\to A,\ x\mapsto 1\otimes x$ are Hopf algebra homomorphisms such that
$$\iota_K(a)\iota_H(x)=a\otimes x,\quad\forall a\in K,\,x\in H.$$
\item\label{con:dcp-2}
There is a Hopf algebra homomorphism $\theta:A \to H$ such that $\theta\iota_H=\id_H$.
\end{enumerate}
We call $A$ a {\bf double cross product with a projection $\theta$} onto $H$, denoted by $(K\bowtie H,\theta)$.
\end{defn}

According to Proposition~\ref{prop:MP_Hopf}, condition~\ref{con:dcp-1}, which leads to the double cross product structure $K\bowtie H$, is equivalent to the existence of a matched pair of Hopf algebras $(H,K,\rightharpoonup,\leftharpoonup)$, where
$$x\rightharpoonup a=(\id_K\otimes\varepsilon_H)(\iota_H(x)\iota_K(a)),\quad
x\leftharpoonup a=(\varepsilon_K\otimes\id_H)(\iota_H(x)\iota_K(a)),\quad\forall a\in K,\,x\in H.$$
Since $x\otimes a\mapsto \iota_H(x)\iota_K(a)$ is a coalgebra map from $H\otimes K$ to $K\otimes H$, and
$$(\id_K\otimes\varepsilon_H\otimes \varepsilon_K\otimes\id_H)\Delta_{K\otimes H} (a\otimes x)=a_1\varepsilon_H(x_1)\otimes \varepsilon_K(a_2)x_2=a\otimes x,$$
it particularly gives that $\iota_H(x)\iota_K(a)=(x_1\rightharpoonup a_1) \otimes (x_2\leftharpoonup a_2)$.

Now we further obtain the following crucial equivalence description for double cross products with projections.
\begin{prop}\label{prop:dcp-hopf-map}
For a matched pair of Hopf algebras $(H,K,\rightharpoonup,\leftharpoonup)$, the double cross product $K\bowtie H$ has a projection $\theta$ onto $H$ such that $\theta\iota_H=\id_H$
if and only if there exists a Hopf algebra homomorphism $\pi:K\to H$ such that
\begin{equation}\label{eq:pi-MP*}
\pi(x_1\rightharpoonup a_1)(x_2\leftharpoonup a_2)=x\pi(a),\quad\forall a\in K,\,x\in H.
\end{equation}
\end{prop}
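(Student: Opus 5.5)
For the forward direction, given $\theta$, I would set $\pi\coloneqq\theta\iota_K:K\to H$. This is a composite of Hopf algebra homomorphisms, so it is one. To get \eqref{eq:pi-MP*}, apply $\theta$ to the identity $\iota_H(x)\iota_K(a)=(x_1\rightharpoonup a_1)\otimes(x_2\leftharpoonup a_2)$ recorded above. The left side becomes $\theta\iota_H(x)\,\theta\iota_K(a)=x\pi(a)$. For the right side, I write $b\otimes y=\iota_K(b)\iota_H(y)$ by condition (i), so that $\theta(b\otimes y)=\pi(b)y$. This gives $\pi(x_1\rightharpoonup a_1)(x_2\leftharpoonup a_2)=x\pi(a)$.

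For the converse, given $\pi$, I would define $\theta:K\bowtie H\to H$ by $\theta(a\otimes x)=\pi(a)x$, i.e.\ $\theta=\mu_H(\pi\otimes\id_H)$. Clearly $\theta\iota_H=\id_H$ and $\theta(1\otimes1)=1$. The coalgebra part is routine: the coproduct on $K\bowtie H$ is the tensor one, and $\mu_H$ and $\pi$ are coalgebra maps, so $\Delta_H\theta=(\theta\otimes\theta)\Delta$ and $\vep_H\theta=\vep$. Since a bialgebra map between Hopf algebras automatically commutes with antipodes, it remains only to check multiplicativity. Using \eqref{eq:dcp},
\[
\theta\big((a\otimes x)(b\otimes y)\big)=\pi(a)\,\pi(x_1\rightharpoonup b_1)(x_2\leftharpoonup b_2)\,y ,
\]
using that $\pi$ is multiplicative. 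By \eqref{eq:pi-MP*} the middle factor equals $x\pi(b)$, so the whole expression is $\pi(a)x\pi(b)y=\theta(a\otimes x)\theta(b\otimes y)$.

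The only mildly delicate step is in the forward direction: justifying $\theta(b\otimes y)=\pi(b)y$. This requires writing elements of $K\bowtie H$ via the factorization $\iota_K(b)\iota_H(y)=b\otimes y$ from Definition~\ref{defn:dcp_proj}(i), and using the formula for $\iota_H(x)\iota_K(a)$ derived just before the proposition. Once that is in place, both directions are direct computations, and the two constructions $\theta\mapsto\theta\iota_K$ and $\pi\mapsto\mu_H(\pi\otimes\id)$ are mutually inverse, which I would note in passing.
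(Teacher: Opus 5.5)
Your proposal is correct and follows the paper's proof essentially step for step. The forward direction sets $\pi=\theta\iota_K$ and applies $\theta$ to the factorization identity $\iota_H(x)\iota_K(a)=(x_1\rightharpoonup a_1)\otimes(x_2\leftharpoonup a_2)$; the converse defines $\theta=\mu_H(\pi\otimes\id_H)$ and gets multiplicativity from \eqref{eq:pi-MP*} via the product formula \eqref{eq:dcp}, and your extra remarks (unit preservation, automatic antipode compatibility, and mutual inverseness of the two constructions) are valid completions of points the paper leaves implicit.
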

\begin{proof}
First, if there is a Hopf algebra homomorphism $\theta:K\bowtie H \to H$ such that $\theta\iota_H=\id_H$, we denote $\pi\coloneqq\theta\iota_K$. Then, $\pi:K\to H$ is a Hopf algebra homomorphism. Moreover, for any $a\in K$, $x\in H$, we have
\begin{align*}
\pi(x_1\rightharpoonup a_1)(x_2\leftharpoonup a_2)
&\stackrel{\rm{\ref{con:dcp-2}}}{=} \theta(\iota_K(x_1\rightharpoonup a_1))
\theta(\iota_H(x_2\leftharpoonup a_2)) = \theta(\iota_K(x_1\rightharpoonup a_1)\iota_H(x_2\leftharpoonup a_2))\\
&= \theta((x_1\rightharpoonup a_1)\otimes(x_2\leftharpoonup a_2))= \theta(\iota_H(x)\iota_K(a))\\
&= \theta(\iota_H(x))\theta(\iota_K(a))\stackrel{\rm{\ref{con:dcp-2}}}{=}
x\pi(a).
\end{align*}
That is, Eq. ~\eqref{eq:pi-MP*} holds.

Conversely, if there exists a Hopf algebra homomorphism $\pi:K\to H$ satisfying Eq.~\eqref{eq:pi-MP*}, we show that the map
$$\theta:K\bowtie H\to H,\ a\otimes x\mapsto \pi(a)x.$$
is a Hopf algebra projection. Indeed, it is clearly a coalgebra map. On the other hand,
\begin{align*}
\theta((a\otimes x)(b\otimes y))&\stackrel{\eqref{eq:dcp}}{=} \theta(a(x_1\rightharpoonup b_1) \otimes (x_2\leftharpoonup b_2)y)=\pi(a(x_1\rightharpoonup b_1))(x_2\leftharpoonup b_2)y\\
&=\pi(a)\pi(x_1\rightharpoonup b_1)(x_2\leftharpoonup b_2)y\stackrel{\eqref{eq:pi-MP*}}{=} \pi(a)x\pi(b)y
=\theta(a\otimes x)\theta(b\otimes y).
\end{align*}
Hence, $\theta$ is a Hopf algebra homomorphism.
Also, for any $x\in H$, we have
$$\theta(\iota_H(x))=\theta(1_K\otimes x)=\pi(1_K)x=x,$$
so $\theta$ is a Hopf algebra projection such that $\theta\iota_H=\id_H$.
\end{proof}

\begin{remark}\label{rk:dcp}
In \cite{Li}, we has particularly studied the double cross products $H\bowtie H$ with the multiplication $\mu_H$ of $H$ as the projection, namely $K=H$ and $\pi=\id_H$ in Proposition~\ref{prop:dcp-hopf-map}, which also leads to the notion of matched pairs of actions~(Definition~\ref{defn:mpa}). Such an algebraic structure was shown to be equivalent to any one of the following items:
\begin{itemize}
\item
braiding operators on a Hopf algebra~\cite{GGV,GGV0},
\item
Yetter-Drinfeld braces~\cite{FS},
\item
Yetter Drinfeld post-Hopf algebras~\cite{Sc},
\end{itemize}
and can produce set-theoretic type solutions of the braid equation.
\end{remark}

\begin{exam}\label{ex:trivial}
In the context of Proposition~\ref{prop:dcp-hopf-map},
if the right action $\leftharpoonup$ is trivial, then $K\bowtie H$ is the smash product Hopf algebra $K\#_\rightharpoonup H$.
In this situation, Eqs.~\eqref{eq:MP1} and \eqref{eq:MP2} imply that $(K,\rightharpoonup)$ is a left module bialgebra over $H$, and Eq.~\eqref{eq:MP5} becomes
$$(x_1\rightharpoonup a)\otimes x_2=(x_2\rightharpoonup a)\otimes x_1,\quad\forall a\in K,\,x\in H.$$
Moreover, Eq.~\eqref{eq:pi-MP*} has the following equivalent form:
$$\pi(x\rightharpoonup a)=x_1\pi(a)S_H(x_2)=\ad_x\pi(a),\quad\forall a\in K,\,x\in H.$$
Namely, the Hopf algebra homomorphism $\pi:K\to H$ is also an $H$-module homomorphism from $(K,\rightharpoonup)$ to $(H,\ad)$.

Instead, if the left action $\rightharpoonup$ is trivial, then $K\bowtie H = K {_\leftharpoonup}\# H$, and Eqs.~\eqref{eq:MP3}, \eqref{eq:MP4} imply that $(H,\leftharpoonup)$ is a right module bialgebra over $K$, and Eq.~\eqref{eq:MP5} becomes
$$a_1\otimes (x\leftharpoonup a_2)=a_2\otimes (x\leftharpoonup a_1),\quad\forall a\in K,\,x\in H.$$
Moreover, Eq.~\eqref{eq:pi-MP*} means that the right action $\leftharpoonup$ is given by
$$x\leftharpoonup a=S_H(\pi(a_1))x\pi(a_2),\quad\forall a\in K,\,x\in H,$$
which is the pull-back of the right adjoint action of $H$ on itself by $\pi$.
\end{exam}

Now we apply Radford's biproduct theory to double cross products $K\bowtie H$ with  projections onto $H$, in order to transmute the ordinary Hopf algebra $K$ into a braided Hopf algebra in the Yetter-Drinfeld module category ${_H^H}\YD$.
\begin{theorem}\label{thm:dcp-yd-hopf}
Let $H$ and $K$ be Hopf algebras such that $K\bowtie H$ is a double cross product with a projection $\theta$ onto $H$. Let $\pi=\theta\iota_K$. There is a  braided Hopf algebra in ${_H^H}\YD$, namely
$K_{\pi}=(K,\bullet_{\pi},1_K,\Delta_K,\vep_K, S_{\pi})$, where
\begin{eqnarray}\label{eq:ydp-prod}
a \bullet_{\pi} b &=& a_1(\pi(S_K(a_2))\rightharpoonup b),\\
\label{eq:ydp-antipode}
S_{\pi}(a) &=& \pi(a_1)\rightharpoonup S_K(a_2),
\end{eqnarray}
and $H$ acts on $K_{\pi}$ via $\rightharpoonup$, and coacts on it via
$\Ad_\pi:a\mapsto \pi(a_1S_K(a_3))\otimes a_2$.

Moreover, the map $\beta:K\bowtie H\to K_{\pi}\#_\rightharpoonup H,\ a\otimes x\mapsto a_1\otimes \pi(a_2)x$ is a Hopf algebra isomorphism.

\end{theorem}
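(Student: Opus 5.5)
The plan is to apply Proposition~\ref{prop:biproduct} to $A=K\bowtie H$, $\iota=\iota_H$ and the projection $\theta$. By Proposition~\ref{prop:dcp-hopf-map}, $\theta(a\otimes x)=\pi(a)x$. This produces the braided Hopf algebra $B=A^{{\rm co}\,\theta}$ in ${_H^H}\YD$. I will then transport every structure map of $B$ to $K$ along an explicit linear isomorphism. The natural candidate is
$$\phi:K\to B,\quad a\mapsto \iota_K(a_1)\iota_H(S_H(\pi(a_2)))=a_1\otimes S_H(\pi(a_2)),$$
which is $a\mapsto a_1\iota(S_H\theta(a_2))$ restricted to $\iota_K(K)$. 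Its inverse should be $\id_K\otimes\vep_H$ restricted to $B$. To see this, note that any $c\in A$ factors as $\iota_K(a)\iota_H(x)$, and $\iota_H(x)$ is killed by the projection $c\mapsto c_1\iota(S_H\theta(c_2))$. Hence the formula $B=\{c_1\iota(S_H\theta(c_2))\}$ in Proposition~\ref{prop:biproduct} shows that $\phi$ is surjective, and $(\id\otimes\vep)\phi=\id_K$ shows that it is injective.

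Next I compute each transported structure. For the coproduct, I use \eqref{eq:braid-co}. Since $\theta(\iota_K(a))=\pi(a)$, the coproduct of $\phi(a)$ is $\phi(a_1)\otimes\iota_K(a_2)\iota_H(S\pi(a_3))$. Because $B$ is spanned by the $\phi(a)$, this should equal $\phi(a_1)\otimes\phi(a_2)$, so $\Delta_B$ transports to $\Delta_K$; the counit transports to $\vep_K$ trivially. For the action, \eqref{eq:braid-action} gives $x\cdot\phi(a)=\iota_H(x_1)\iota_K(a_1)\iota_H(S\pi(a_2)S x_2)$. Applying $\id\otimes\vep_H$ together with $\iota_H(x)\iota_K(a)=(x_1\rightharpoonup a_1)\otimes(x_2\leftharpoonup a_2)$ yields $x\rightharpoonup a$. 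For the coaction, $\rho(\phi(a))=\theta(\phi(a)_1)\otimes\phi(a)_2$, which after applying $\id\otimes\vep_H$ on the second factor gives $\pi(a_1)S\pi(a_3)\otimes a_2=\Ad_\pi(a)$. For the product, I compute $\phi(a)\phi(b)=\iota_K(a_1)\iota_H(S\pi(a_2))\iota_K(b_1)\iota_H(S\pi(b_2))$ and project with $\id\otimes\vep$. By \eqref{eq:dcp} this gives $a_1(S\pi(a_2)\rightharpoonup b)=a\bullet_\pi b$, using $S_H\pi=\pi S_K$. For the antipode, \eqref{eq:braid-anti} with $S_{\bowtie}$ from \eqref{eq:dcp-anti} should give $\pi(a_1)\rightharpoonup S_K(a_2)$.

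Finally, the isomorphism $\beta$ is $(\phi^{-1}\otimes\id)\Theta$ with $\Theta$ from \eqref{eq:braid-iso}. Explicitly, $\Theta(a\otimes x)=\phi(a_1)\otimes\pi(a_2)x$, so $\beta(a\otimes x)=a_1\otimes\pi(a_2)x$. This is a Hopf algebra isomorphism because $\Theta$ is one and $\phi$ is an isomorphism in ${_H^H}\YD$ of braided Hopf algebras. The smash product $K_\pi\#_\rightharpoonup H$ is exactly the biproduct with the transported structures.

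I expect the main obstacle to be the bookkeeping in the product and antipode computations. Each step requires repeated use of \eqref{eq:MP5} and of the coalgebra map property of $\leftharpoonup$ to collapse terms like $(x_2\leftharpoonup b_2)$ under $\vep_H$, and the antipode step additionally requires \eqref{eq:pi-MP*}. To keep these manageable, I will always reduce to the image of $\id\otimes\vep_H$ before simplifying, which is legitimate because $\phi^{-1}=(\id\otimes\vep_H)|_B$.
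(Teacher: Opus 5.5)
Your proposal is correct and follows the paper's proof essentially step for step: you apply Proposition~\ref{prop:biproduct} to $K\bowtie H$ with $\theta$ and $\iota_H$, transport the braided Hopf structure of $B$ to $K$ along $\phi(a)=a_1\otimes S_H(\pi(a_2))$ with inverse $\id_K\otimes\vep_H$, and set $\beta=(\phi^{-1}\otimes\id_H)\Theta$. Your explicit check that $\phi$ maps onto $B$ is a small improvement over the paper, which simply asserts the spanning set; on the other hand, you overestimate the antipode step, since it only uses \eqref{eq:dcp-anti}, $\vep_H(x\leftharpoonup a)=\vep_H(x)\vep_K(a)$ and $\pi S_K=S_H\pi$, and needs neither \eqref{eq:pi-MP*} nor \eqref{eq:MP5}.
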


\begin{proof}
First by Proposition~\ref{prop:dcp-hopf-map}, the map $\pi=\theta\iota_K:K\to H$ is a Hopf algebra homomorphism satisfying Eq.~\eqref{eq:pi-MP*}. According to Proposition~\ref{prop:biproduct}, for $\xymatrix{K\bowtie H \ar@<0.5ex>[r]^{\quad \theta}& H  \ar@<0.5ex>[l]^{\quad \iota_H}}$,
the corresponding subalgebra of coinvariants
$$B\coloneqq (K\bowtie H)^{{\rm co}\,\theta}={\rm span}\{a_1\otimes S_H(\pi(a_2))\,|\,a\in K\}$$
is a braided Hopf algebra in ${_H^H}\YD$. So it is natural to define a linear map
$$\Phi:K\to B,\ a\mapsto a_1\otimes S_H(\pi(a_2)),$$
with the inverse $\Phi^{-1}$ given by $\id_K\otimes\vep_H$.

Since the coproduct of $B$ is computed as
\begin{align*}
\Delta_B(a_1\otimes S_H(\pi(a_2)))&\stackrel{\eqref{eq:braid-co}}{=}
(a_1\otimes S_H(\pi(a_6)))(1_K\otimes S_H(\pi(a_2)S_H(\pi(a_5))))\otimes (a_3\otimes S_H(\pi(a_4)))\\
&=(a_1\otimes S_H(\pi(a_2)S_H(\pi(a_5))\pi(a_6)))\otimes (a_3\otimes S_H(\pi(a_4)))\\
&=(a_1\otimes S_H(\pi(a_2))) \otimes (a_3\otimes S_H(\pi(a_4))),
\end{align*}
$\Phi$ is clearly a coalgebra isomorphism. Therefore, we can transplant the Hopf algebra structure of $B$ in ${_H^H}\YD$ to $K$ via $\Phi$, denoted by $K_\pi$. Namely, we define another multiplication $\bullet_\pi$ on $K$ by
\begin{align*}
a\bullet_\pi b&\coloneqq \Phi^{-1}(\Phi(a)\Phi(b))= (\id_K\otimes\vep_H)((a_1\otimes S_H(\pi(a_2)))(b_1\otimes S_H(\pi(b_2))))\\
&=a_1(S_H(\pi(a_2))\rightharpoonup b)=a_1(\pi(S_K(a_2))\rightharpoonup b),
\end{align*}
and $1_K$ is clearly the unit of $(K,\bullet_\pi)$. Also, we compute that
\begin{align*}
\Phi^{-1}(S_B(\Phi(a)))&\stackrel{\eqref{eq:braid-anti}}{=}
(\id_K\otimes\vep_H)((1_K\otimes\theta(a_1\otimes S_H(\pi(a_4))))
S_{\bowtie}(a_2\otimes S_H(\pi(a_3))))\\
&\stackrel{\eqref{eq:dcp-anti}}{=}
(\id_K\otimes\vep_H)((1_K\otimes\pi(a_1)S_H(\pi(a_6)))\\
&\qquad\cdot((S_H^2(\pi(a_5))\rightharpoonup S_K(a_2))\otimes (S_H^2(\pi(a_4))\leftharpoonup S_K(a_3))))\\
&=\pi(a_1)S_H(\pi(a_4))\rightharpoonup(S_H^2(\pi(a_3))\rightharpoonup S_K(a_2))\\
&=\pi(a_1)S_H(S_H(\pi(a_3))\pi(a_4))\rightharpoonup S_K(a_2)\\
&= \pi(a_1)\rightharpoonup S_K(a_2)= S_\pi(a), \\[.5em]
\Phi^{-1}(x\rightharpoonup\Phi(a))&=
(\id_K\otimes\vep_H)(x\rightharpoonup (a_1\otimes S_H(\pi(a_2))))\\
&\stackrel{\eqref{eq:braid-action}}{=}
(\id_K\otimes\vep_H)((x_1\rightharpoonup a_1) \otimes (x_2\leftharpoonup a_2)S_H(x_3\pi(a_3)))=x\rightharpoonup a,\\[.5em]
(\id_H\otimes\Phi^{-1})\rho(\Phi(a))
&\stackrel{\eqref{eq:braid-action}}{=}(\id_H\otimes\id_K\otimes\vep_H)
(\pi(a_1)S_H(\pi(a_4)) \otimes (a_2\otimes S_H(\pi(a_3))))\\
&=\pi(a_1)S_H(\pi(a_3)) \otimes a_2 = \Ad_\pi(a).
\end{align*}
Hence, we obtain the desired Hopf algebra $K_{\pi}=(K,\bullet_{\pi},1_K,\Delta_K,\vep_K, S_{\pi})$ in ${_H^H}\YD$ with respect to the action $\rightharpoonup$ and the coaction
$\Ad_\pi$. Moreover, Eq.~\eqref{eq:braid-iso} implies a Hopf algebra isomorphism
$$\Theta:K\bowtie H\to B\#_\rightharpoonup H,\ a\otimes x \mapsto (a_1\otimes S_H(\pi(a_2)))\otimes \pi(a_3)x,$$
and $\beta:K\bowtie H\to K_\pi\#_\rightharpoonup H$ is given by $(\Phi^{-1}\otimes\id_H)\Theta$, so $\beta(a\otimes x)=a_1\otimes\pi(a_2)x$.
\end{proof}

A typical construction of matched pairs of Hopf algebras $(H,K,\rightharpoonup,\leftharpoonup)$ is by using (Hopf) skew pairing between $K$ and $H$. When $H$ has the bijective antipode, a bilinear map $\langle\cdot,\cdot\rangle:K\otimes H\to\bk$ is called a {\bf (Hopf) skew pairing}, if
\begin{align*}
  &\langle ab,x \rangle= \langle a,x_1 \rangle\langle b,x_2 \rangle,\quad \langle a,xy \rangle= \langle a_1,y \rangle\langle a_2,x \rangle,\\
  &\langle 1_K,x \rangle= \vep_H(x),\quad \langle a,1_H \rangle= \vep_K(a),\quad \langle S_K(a),x \rangle= \langle a,S_H^{-1}(x) \rangle.
\end{align*}
Then, there exists the following matched pair of Hopf algebras $(H,K,\rightharpoonup,\leftharpoonup)$~(see e.g. \cite{DT}):
\begin{align}\label{eq:sp-action}
x \rightharpoonup a= \langle a_1,x_1\rangle \langle a_3,S_H^{-1}(x_2)\rangle a_2,\quad
x \leftharpoonup a= \langle a_1,x_1\rangle \langle S_K(a_2),x_3\rangle  x_2,\quad \forall a\in K,\,x\in H.
\end{align}
The corresponding double cross product $D(K,H)=K\bowtie H$ is called the {\bf generalized quantum double}, with the famous Drinfeld double $D(H)=H^{* \rm cop}\bowtie H$ as the classical case (via the natural dual pairing between $H^{* \rm cop}$ and $H$).

Applying the general framework of Proposition~\ref{prop:dcp-hopf-map} and Theorem~\ref{thm:dcp-yd-hopf}, we recovers the following result, which can be found e.g. in~\cite[\S 3]{BB},
for generalized quantum doubles with projections.
\begin{prop}\label{prop:gqd}
The generalized quantum double $D(K,H)=K \bowtie H$ has a projection onto $H$ if and only if there exists a Hopf algebra homomorphism $\pi:K\to H$ such that
\begin{equation}\label{eq:sp-MP*}
x\pi(a)=\langle a_1,x_1\rangle\langle a_3,S_H^{-1}(x_3)\rangle \pi(a_2)x_2,\quad\forall a\in K,\,x\in H.
\end{equation}
In this situation, there exists a braided Hopf algebra $K_{\pi}=(K,\bullet_{\pi},1_K,\Delta_K,\vep_K, S_{\pi})$ in ${_H^H}\YD$ with respect to the action $\rightharpoonup$ in Eq.~\eqref{eq:sp-action} and the coaction
$\Ad_\pi$, where
\begin{eqnarray*}
a \bullet_{\pi} b
&=& \langle b_1S_K(b_3),\pi(S_K(a_2))\rangle a_1b_2,\\
S_{\pi}(a) &=& \langle S_K(a_4)S_K^2(a_2),\pi(a_1)\rangle S_K(a_3).
\end{eqnarray*}
\end{prop}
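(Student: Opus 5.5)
The plan is to specialize Proposition~\ref{prop:dcp-hopf-map} and Theorem~\ref{thm:dcp-yd-hopf} to the skew-pairing matched pair \eqref{eq:sp-action}. For the first claim, Proposition~\ref{prop:dcp-hopf-map} says that a projection exists if and only if there is a Hopf algebra homomorphism $\pi:K\to H$ satisfying \eqref{eq:pi-MP*}. So I only have to show that, for the actions \eqref{eq:sp-action}, condition \eqref{eq:pi-MP*} is equivalent to \eqref{eq:sp-MP*}.

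\emph{Step 1: the left side of \eqref{eq:pi-MP*}.} Substituting \eqref{eq:sp-action}, coassociativity regroups the terms as
\[
\pi(x_1\rightharpoonup a_1)(x_2\leftharpoonup a_2)
=\langle a_1,x_1\rangle\langle a_3,S_H^{-1}(x_2)\rangle\langle a_4,x_3\rangle\langle S_K(a_5),x_5\rangle\,\pi(a_2)x_4 .
\]
The middle factors $\langle a_3,S_H^{-1}(x_2)\rangle\langle a_4,x_3\rangle$ combine by the multiplicativity axiom $\langle a,xy\rangle=\langle a_1,y\rangle\langle a_2,x\rangle$ into $\langle a_3,x_3S_H^{-1}(x_2)\rangle$, which is $\varepsilon(a_3)\varepsilon(x_2)$. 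This collapses the expression to $\langle a_1,x_1\rangle\langle S_K(a_3),x_3\rangle\pi(a_2)x_2$. Using $\langle S_K(a),x\rangle=\langle a,S_H^{-1}(x)\rangle$, this is exactly the right-hand side of \eqref{eq:sp-MP*}. Hence \eqref{eq:pi-MP*} holds if and only if \eqref{eq:sp-MP*} holds.

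The index bookkeeping here is the main thing to get right. In particular I must apply the order-reversing rule for $\langle a,xy\rangle$ with the correct ordering of $x_2,x_3$, so that the $S_H^{-1}$ cancellation actually occurs.

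\emph{Step 2: the transmuted structure.} Theorem~\ref{thm:dcp-yd-hopf} directly provides $K_\pi$ in ${_H^H}\YD$ with action $\rightharpoonup$ and coaction $\Ad_\pi$. It remains to make $\bullet_\pi$ and $S_\pi$ explicit by inserting \eqref{eq:sp-action} into \eqref{eq:ydp-prod} and \eqref{eq:ydp-antipode}.

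For the product, set $y=\pi(S_K(a_2))$. Since $\pi$ and $S_K$ are coalgebra (anti)morphisms, we have $y_1\otimes y_2=\pi(S_K(a_3))\otimes\pi(S_K(a_2))$. Then
\[
a\bullet_\pi b=\langle b_1,y_1\rangle\langle b_3,S_H^{-1}(y_2)\rangle\, a_1b_2 .
\]
Next I rewrite $\langle b_3,S_H^{-1}(y_2)\rangle=\langle S_K(b_3),y_2\rangle$ and merge the two pairings using $\langle ab,x\rangle=\langle a,x_1\rangle\langle b,x_2\rangle$. This gives $\langle b_1S_K(b_3),\pi(S_K(a_2))\rangle a_1b_2$, as stated.

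For the antipode, set $x=\pi(a_1)$ and $c=S_K(a_2)$. Here $c_1\otimes c_2\otimes c_3=S_K(a_4)\otimes S_K(a_3)\otimes S_K(a_2)$. The formula $\langle c_1,x_1\rangle\langle S_K(c_3),x_2\rangle c_2$ then becomes $\langle S_K(a_4),\pi(a_1)\rangle\langle S_K^2(a_2),\pi(a_2')\rangle S_K(a_3)$, with the splitting of $\pi(a_1)$ renumbered appropriately. Merging the two pairings via $\langle ab,x\rangle$ yields $\langle S_K(a_4)S_K^2(a_2),\pi(a_1)\rangle S_K(a_3)$. The only delicate point is tracking the Sweedler indices after $\pi(a_1)$ is split, and checking that the ordering matches the pairing-of-products rule.
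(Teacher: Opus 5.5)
Your proposal is correct and follows the route the paper itself indicates; the paper gives no separate proof and simply applies Proposition~\ref{prop:dcp-hopf-map} and Theorem~\ref{thm:dcp-yd-hopf}. You reduce the projection question to checking that \eqref{eq:pi-MP*} becomes \eqref{eq:sp-MP*} under the actions \eqref{eq:sp-action}, then substitute \eqref{eq:sp-action} into \eqref{eq:ydp-prod} and \eqref{eq:ydp-antipode}, and your Sweedler bookkeeping checks out, including the cancellation $\langle a_3,S_H^{-1}(x_2)\rangle\langle a_4,x_3\rangle=\langle a_3,x_3S_H^{-1}(x_2)\rangle=\varepsilon_K(a_3)\varepsilon_H(x_2)$ and the merging of pairings via $\langle ab,x\rangle=\langle a,x_1\rangle\langle b,x_2\rangle$.
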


\begin{coro}
In the context of Proposition~\ref{prop:gqd}, if $H$ (resp. $K$) is cocommutative, then the right action $\leftharpoonup$ (resp. the left action $\rightharpoonup$) is trivial, and it specializes to the situation in Example~\ref{ex:trivial}.
\end{coro}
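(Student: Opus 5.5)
The corollary is a direct specialization of the skew-pairing formulas \eqref{eq:sp-action}. Suppose $H$ is cocommutative. I will show that $x\leftharpoonup a=\vep_K(a)x$.

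Start from $x\leftharpoonup a=\langle a_1,x_1\rangle\langle S_K(a_2),x_3\rangle x_2$. Cocommutativity of $H$ lets us permute the Sweedler factors of $x$ freely, so this equals $\langle a_1,x_1\rangle\langle S_K(a_2),x_2\rangle x_3$. The skew-pairing axiom $\langle a,xy\rangle=\langle a_1,y\rangle\langle a_2,x\rangle$, read backwards together with cocommutativity, turns the product of pairings into $\langle a_1,x_2\rangle\langle S_K(a_2),x_1\rangle$. Using the other axiom $\langle ab,x\rangle=\langle a,x_1\rangle\langle b,x_2\rangle$, this is $\langle S_K(a_2)a_1,x_1\rangle$. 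That is the wrong order for the antipode identity, so instead I keep the order $\langle a_1,x_1\rangle\langle S_K(a_2),x_2\rangle=\langle a_1S_K(a_2),x_1\rangle=\vep_K(a)\vep_H(x_1)$, which is exactly the form produced by $\langle ab,x\rangle=\langle a,x_1\rangle\langle b,x_2\rangle$. Together with $\langle 1_K,x\rangle=\vep_H(x)$, this gives $x\leftharpoonup a=\vep_K(a)x$, so the right action is trivial.

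Now suppose $K$ is cocommutative. Then $x\rightharpoonup a=\langle a_1,x_1\rangle\langle a_3,S_H^{-1}(x_2)\rangle a_2$. Reorder the $a$'s to get $\langle a_1,x_1\rangle\langle a_2,S_H^{-1}(x_2)\rangle a_3$. By $\langle a,xy\rangle=\langle a_1,y\rangle\langle a_2,x\rangle$, the pairings combine to $\langle a_1,S_H^{-1}(x_2)x_1\rangle$. Since $S_H^{-1}$ is the antipode of $H^{\rm cop}$, we have $S_H^{-1}(x_2)x_1=\vep_H(x)1_H$. Hence $x\rightharpoonup a=\vep_H(x)a$, so the left action is trivial.

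The only point needing care is matching the Sweedler order with the skew-pairing axioms so that an antipode identity applies. Cocommutativity of the relevant algebra is exactly what allows this reordering. Once the appropriate action is known to be trivial, the statement reduces verbatim to the two cases of Example~\ref{ex:trivial}. In those cases the double cross product becomes a smash product, and \eqref{eq:sp-MP*} becomes the adjoint-compatibility of $\pi$ stated there.
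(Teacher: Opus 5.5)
Your argument is correct, and it is the direct check from \eqref{eq:sp-action} that the paper leaves implicit (the paper gives no separate proof). Cocommutativity puts the two pairing factors next to each other, after which $a_1S_K(a_2)=\vep_K(a)1_K$, respectively $S_H^{-1}(x_2)x_1=\vep_H(x)1_H$, collapses them. You should delete the abandoned detour through $\langle S_K(a_2)a_1,x_1\rangle$ in the first case: it is unnecessary, and it credits the reordering to the axiom $\langle a,xy\rangle=\langle a_1,y\rangle\langle a_2,x\rangle$, whereas that swap comes from cocommutativity alone.
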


\begin{remark}
When $K$ is cocommutative, the transmutation in Proposition~\ref{prop:gqd} is trivial, namely $K_\pi=K$. By contrast, the transmutation in Theorem~\ref{thm:dcp-yd-hopf} could be highly nontrivial even for cocommutative case, e.g. the universal enveloping algebra of a post Lie algebra and its subadjacent Hopf algebra~(see~\cite{ELM,LST}). Hence, the framework of Theorem~\ref{thm:dcp-yd-hopf} for double cross products with projections is much broader than the situation of generalized quantum doubles.
\end{remark}

\begin{exam}
In \cite[Theorem~1.5]{HN}, Habbestad and Neshveyev generalized Majid's transmutation theory~\cite{Ma1} by showing that a Hopf algebra homomorphism $\pi:K\to H$, in which $(H,r)$ is coquasitriangular, induces a braided Hopf algebra $K_r$ in ${_H^H}\YD$. It is a Tannaka-Krein type reconstruction for braided Hopf algebras via a monoidal functor between the categories of comodules.

Now we show that it can be obtained by Proposition~\ref{prop:gqd}.
Recall that a {\bf coquasitriangular} Hopf algebra $(H,r)$ consists of a Hopf algebra $H$ and a convolution-invertible bilinear map $r:H\otimes H\to\bk$ such that
\begin{eqnarray*}
r(x_1\otimes y_1)x_2y_2&=&r(x_2\otimes y_2)y_1x_1,\\
r(xy\otimes z)&=&r(x\otimes z_1)r(y\otimes z_2),\\
r(x\otimes yz)&=&r(x_1\otimes z)r(x_2\otimes y)
\end{eqnarray*}
for any $x,y,z\in H$. Now given a Hopf algebra homomorphism $\pi:K\to H$, one can define the following skew pairing between $K$ and $H$,
$$\langle a,x\rangle=r^{-1}(x,\pi(a))=r(S_H(x),\pi(a)),\quad \forall a\in K,\,x\in H,$$
such that Eq.~\eqref{eq:sp-MP*} holds. Then, the corresponding  matched pair $(H,K,\rightharpoonup,\leftharpoonup)$ is given by
$$x \rightharpoonup a = r^{-1}(x_1,\pi(a_1))r(x_2,\pi(a_3))a_2, \quad  x \leftharpoonup a = r^{-1}(x_1,\pi(a_1))r(x_3,\pi(a_2))x_2 \quad \forall a\in K,\,x\in H,$$
and the above $K_r$ in \cite{HN} is exactly the braided Hopf algebra $K_\pi$ obtained by Proposition~\ref{prop:gqd}.
\end{exam}

Moreover, as the dual version of \cite[Theorem 4.8, Corollary 4.9]{Zh}, there is the following braided tensor equivalence between the relative Yetter-Drinfeld module category ${_K^H}\YD$ and
the category ${_{K_r}}({^H}\calm)$ consisting of left $K_r$-modules in the category of left $H$-comodules. Now we still wonder how to extend such a result to the more general circumstance in Proposition~\ref{prop:gqd}.
\begin{prop}
Given a coquasitriangular Hopf algebra $(H,r)$ and a Hopf algebra homomorphism $\pi:K\to H$, if $(M,\cdot,\rho)$ is a left $K_r$-module in the category of left $H$-comodules, then
$(M,\rightharpoonup,\rho)$ is a relative Yetter-Drinfeld module in ${_K^H}\YD$ such that
$$a\rightharpoonup m=r(\pi(S_K(a_2)),m_{-1})\,a_1\cdot m_0,\quad\forall a\in K,\,m\in M,$$
and vice versa, since we conversely have
$$a\cdot m=r(\pi(S_K^2(a_2)),m_{-1})\,a_1\rightharpoonup m_0,\quad\forall a\in K,\,m\in M.$$

Correspondingly, ${_{K_r}}({^H}\calm)$ can be endowed with the following braided tensor category structure from ${_K^H}\YD$. For any $M,N\in {_{K_r}}({^H}\calm)$, we have
\begin{align*}
a\cdot(m\otimes n) & = r(\pi(S_K(a_2S_K(a_4))),m_{-1})\,(a_1\cdot m_0)\otimes (a_3\cdot n_0),\\
\rho(m\otimes n) & = m_{-1}n_{-1}\otimes (m_0\otimes n_0),
\end{align*}
where $a\in K$, $m\in M$ and $n\in N$, and the braiding $c$ of ${_{K_r}}({^H}\calm)$ is given by
$$c_{M,N}(m\otimes n)=
r(\pi(S_K(m_{-1})),n_{-1})\,(m_{-2}\cdot n_0)\otimes m_0,\quad\forall m\in M,\,n\in N.$$
\end{prop}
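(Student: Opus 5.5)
Our approach is to verify the claimed module structure directly and then transport the braided tensor structure through the resulting isomorphism of categories. Throughout, we use the coquasitriangular identities and the fact that $\pi$ is a Hopf algebra homomorphism. The convolution inverse is $r^{-1}(x,y)=r(S_H(x),y)$, and $r(S_H(x),S_H(y))=r(x,y)$.

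\textbf{Step 1 (bijectivity).} First we check that the two formulas are mutually inverse. Substitute the formula for $a\rightharpoonup m$ into the one for $a\cdot m$. The comodule structure of $a_1\cdot m_0$ is governed by the fact that $M$ is a $K_r$-module in ${^H}\calm$: the action $K_r\otimes M\to M$ is $H$-colinear for the coaction $\Ad_\pi$ on $K_r$. Hence $\rho(a\cdot m)=\pi(a_1)\pi(S_K(a_3))m_{-1}\otimes a_2\cdot m_0$. The two $r$-factors then combine, via multiplicativity of $r$ in its first argument, into $r(\pi(S_K^2(a_3)S_K(a_2)),m_{-1})$ (up to a reordering of Sweedler indices). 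This collapses to $\vep$ by the antipode axiom. The reverse composition is handled the same way, using the relative Yetter-Drinfeld condition for $\rho(a\rightharpoonup m)$.

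\textbf{Step 2 (module and Yetter-Drinfeld axioms).} Next we show that $\rightharpoonup$ is an ordinary associative $K$-action. Expand $a\rightharpoonup(b\rightharpoonup m)$ and use the coaction formula from Step 1. Here the product $\bullet_r$ of $K_r$ must reappear. In the setting of Proposition~\ref{prop:gqd} with $\langle a,x\rangle=r(S_H(x),\pi(a))$, it reads
\[a\bullet_r b=\langle b_1S_K(b_3),\pi(S_K(a_2))\rangle a_1b_2,\]
that is, ordinary multiplication twisted by $r$-factors. The identity $a_1b_1\cdot(\ldots)=(a\bullet b)\cdot(\ldots)$ then reduces to the defining relation of $\bullet_r$ together with the bimultiplicativity of $r$. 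The relative Yetter-Drinfeld compatibility
\[\rho(a\rightharpoonup m)=\pi(a_1)m_{-1}\pi(S_K(a_3))\otimes a_2\rightharpoonup m_0\]
follows from the colinearity formula of Step 1 combined with the braid-commutation axiom $r(x_1,y_1)x_2y_2=r(x_2,y_2)y_1x_1$. We use the latter to move $\pi(a)$ past $m_{-1}$ in the coaction. The converse implication is the same computation read backwards.

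\textbf{Step 3 (tensor structure and braiding).} The correspondence of Steps 1--2 is identity on morphisms: it preserves $H$-colinear maps, and it maps $K_r$-linear maps to $K$-linear maps because the formulas are natural in $M$. It therefore gives an isomorphism of categories ${_{K_r}}({^H}\calm)\cong{_K^H}\YD$. We transport the structure through it. The tensor product in ${_K^H}\YD$ uses the diagonal action $a\rightharpoonup(m\otimes n)=a_1\rightharpoonup m\otimes a_2\rightharpoonup n$ and the codiagonal coaction. Converting back with $a\cdot(-)=r(\pi(S_K^2(a_2)),(-)_{-1})\,a_1\rightharpoonup(-)_0$ and expanding each $\rightharpoonup$, we collect the $r$-factors. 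The $n_{-1}$-dependence cancels by multiplicativity, leaving $r(\pi(S_K(a_2S_K(a_4))),m_{-1})$, as stated. The braiding of ${_K^H}\YD$ is $m\otimes n\mapsto m_{-1}\rightharpoonup n\otimes m_0$, with $K$-coaction via $\pi$. Rewriting $\rightharpoonup$ in terms of $\cdot$ gives the stated formula $c_{M,N}$.

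The main obstacle is Step 2, namely the associativity check. It requires carefully tracking how $\bullet_r$ (given via the skew pairing) interacts with the twisting factors. We would organize it by first proving the lemma $\rho(a\cdot m)=\pi(a_1)m_{-1}\pi(S_K(a_3))\otimes a_2\cdot m_0$, and then working entirely with convolution identities of $r$ restricted along $\pi$.
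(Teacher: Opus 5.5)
The paper gives no proof of this proposition; it only appeals to the dual of \cite[Theorem~4.8, Corollary~4.9]{Zh}. Your argument therefore has to stand on its own.

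Steps~1--2 and the tensor-product part of Step~3 are sound:
\begin{itemize}
\item The inverse check uses only coassociativity of $\rho$ and $r(xy,z)=r(x,z_1)r(y,z_2)$. Colinearity of $\cdot$ is not needed there.
\item The relative Yetter--Drinfeld condition follows from colinearity together with $r(x_1,y_1)x_2y_2=r(x_2,y_2)y_1x_1$. The axiom is used to move $\pi(S_K(a_3))$, not $\pi(a)$, past $m_{-1}$.
\item Associativity, via $ab=a_1\bullet_\pi(\pi(a_2)\rightharpoonup b)$ and colinearity, really needs only bimultiplicativity of $r$.
\item The $n_{-1}$-factors cancel as you say.
\end{itemize}
One slip: the ``lemma'' $\rho(a\cdot m)=\pi(a_1)m_{-1}\pi(S_K(a_3))\otimes a_2\cdot m_0$ in your last paragraph is the relative Yetter--Drinfeld condition, not colinearity of $\cdot$. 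What you need is the Step~1 formula $\rho(a\cdot m)=\pi(a_1)\pi(S_K(a_3))m_{-1}\otimes a_2\cdot m_0$.

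The genuine gap is the braiding. In ${_K^H}\YD$ the coaction takes values in $H$, so $m_{-1}\in H$ and $m_{-1}\rightharpoonup n$ is undefined, since only $K$ acts on $N$. The map $\pi:K\to H$ lets you push a $K$-coaction forward to $H$; it does not lift $\rho$ to a $K$-coaction. Your substitution $a=m_{-1}$ into $a\rightharpoonup n=r(\pi(S_K(a_2)),n_{-1})\,a_1\cdot n_0$ does reproduce the displayed $c_{M,N}$. But it does so only for objects carrying a $K$-coaction $\delta$ with $\rho=(\pi\otimes\id)\delta$, which is not part of the data.

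The gap cannot be closed, because the braided claim is false in this generality. Take $H=\bk$ with trivial $r$ and $\pi=\vep_K$. Then $\rightharpoonup$ and $\Ad_\pi$ are trivial, $K_r=K$, and both categories are ${_K}\calm$ with tensor product via $\Delta_K$; the displayed action on $m\otimes n$ becomes $a_1\cdot m\otimes a_2\cdot n$. Now let $K=\bk^G$ with $G$ finite nonabelian, and let $\bk_g$ be the one-dimensional module on which $f\in\bk^G$ acts by $f(g)$. Then $\bk_g\otimes\bk_h\cong\bk_{gh}\not\cong\bk_{hg}\cong\bk_h\otimes\bk_g$ whenever $gh\neq hg$, so no natural isomorphism $M\otimes N\to N\otimes M$ exists.

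What your argument actually proves is an isomorphism of monoidal categories ${_{K_r}}({^H}\calm)\cong{_K^H}\YD$. A braiding can be transported only when ${_K^H}\YD$ genuinely has one. For example, when $K=H$ and $\pi=\id_H$, the map $m\otimes n\mapsto m_{-1}\rightharpoonup n\otimes m_0$ is the usual Yetter--Drinfeld braiding. In that case your substitution yields exactly the stated $c_{M,N}$, and the braid axioms hold automatically by transport of structure.
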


\begin{exam}
Recall that a {\bf quasitriangular} Hopf algebra $H$ has an invertible element $R$ in $H\otimes H$, called $R$-matrix and satisfying
\begin{align}
\label{eq:almost_cocom}&\Delta^{\rm op}(x)=R\Delta(x)R^{-1},\\
\label{eq:QT1}&(\Delta\otimes\id)R=R_{13}R_{23},\\
\label{eq:QT2}&(\id\otimes \Delta)R=R_{13}R_{12}.
\end{align}
where $R_{12}=s_i\otimes t_i\otimes 1$, $R_{23}=1\otimes s_i\otimes t_i$ and $R_{13}=s_i\otimes 1\otimes t_i$ with $R=s_i\otimes t_i$ using the Einstein summation convention.
In particular, $R^{-1}=(S\otimes \id)R=(\id\otimes S^{-1})R$.

Drinfeld~\cite{D1} and Majid~\cite{Ma2} independently showed that a finite-dimensional Hopf algebra $H$ is quasitriangular if and only if the Drinfeld double $D(H)=H^{* \rm cop} \bowtie H$ has a projection onto $H$. A dual version for coquasitriangular Hopf algebras can be found in \cite[Proposition~3.9]{BB}.

\end{exam}

Now a generalized quantum double version is given also by Proposition~\ref{prop:gqd}.
\begin{coro}\label{coro:db-qt}
Given a Hopf algebra $K$ and a quasitriangular Hopf algebra $(H,R)$ with a skew pairing $\langle\cdot,\cdot\rangle$ between them, the generalized quantum double $D(K,H) = K \bowtie H$ has a projection
$$\theta:D(K,H)\to H: a\otimes x\mapsto \langle a,s_i\rangle t_ix$$
such that $\theta\iota_H=\id_H$, where $R=s_i\otimes t_i$.
\end{coro}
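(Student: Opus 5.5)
The plan is to reduce the statement to Proposition~\ref{prop:gqd}. Set
$$\pi:K\to H,\quad \pi(a)\coloneqq\langle a,s_i\rangle t_i .$$
Then the proposed $\theta$ is exactly the map $a\otimes x\mapsto \pi(a)x$ built in the proof of Proposition~\ref{prop:dcp-hopf-map}. So it suffices to show two things: that $\pi$ is a Hopf algebra homomorphism, and that Eq.~\eqref{eq:sp-MP*} holds. Then $\theta$ is a Hopf algebra projection with $\theta\iota_H=\id_H$, since $\theta(1_K\otimes x)=\pi(1_K)x=x$.

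\emph{Step 1: $\pi$ is a bialgebra map.} Multiplicativity comes from the pairing axiom $\langle ab,x\rangle=\langle a,x_1\rangle\langle b,x_2\rangle$ together with \eqref{eq:QT1}. The latter reads $(s_i)_1\otimes(s_i)_2\otimes t_i=s_i\otimes s_j\otimes t_it_j$, which gives
$$\pi(ab)=\langle a,s_i\rangle\langle b,s_j\rangle t_it_j=\pi(a)\pi(b).$$
Unitality follows from $\langle 1_K,s_i\rangle t_i=\vep(s_i)t_i=1_H$, using $(\vep\otimes\id)R=1$. For comultiplicativity, use \eqref{eq:QT2} in the form $s_is_j\otimes t_j\otimes t_i=s_i\otimes (t_i)_1\otimes(t_i)_2$. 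Combined with $\langle a,xy\rangle=\langle a_1,y\rangle\langle a_2,x\rangle$, this gives
$$\Delta\pi(a)=\langle a,s_is_j\rangle t_j\otimes t_i=\langle a_1,s_j\rangle\langle a_2,s_i\rangle t_j\otimes t_i=\pi(a_1)\otimes\pi(a_2).$$
The counit identity comes from $(\id\otimes\vep)R=1$. A bialgebra map between Hopf algebras automatically commutes with antipodes, so $\pi$ is a Hopf algebra homomorphism.

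\emph{Step 2: Eq.~\eqref{eq:sp-MP*}.} This is the step needing the most care with Sweedler indices. Iterating the anti-multiplicativity of the pairing in the second variable gives $\langle a,uvw\rangle=\langle a_1,w\rangle\langle a_2,v\rangle\langle a_3,u\rangle$. Hence the right-hand side of \eqref{eq:sp-MP*} is
$$\langle a_1,x_1\rangle\langle a_2,s_i\rangle\langle a_3,S_H^{-1}(x_3)\rangle\, t_ix_2=\langle a,S_H^{-1}(x_3)s_ix_1\rangle\, t_ix_2 .$$
Now apply \eqref{eq:almost_cocom}, written as $s_ix_1\otimes t_ix_2\otimes x_3=x_2s_i\otimes x_1t_i\otimes x_3$. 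This turns the expression into
$$\langle a,S_H^{-1}(x_3)x_2s_i\rangle x_1t_i=\langle a,s_i\rangle xt_i=x\pi(a).$$
That is the left-hand side. By Proposition~\ref{prop:gqd}, and via Proposition~\ref{prop:dcp-hopf-map}, this proves the corollary. The only delicate point is matching the order of tensor legs when invoking \eqref{eq:almost_cocom} and \eqref{eq:QT2}; everything else is direct substitution.
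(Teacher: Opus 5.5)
Your proof is correct and follows the paper's route. The paper simply asserts that $\pi(a)=\langle a,s_i\rangle t_i$ is a Hopf algebra homomorphism satisfying \eqref{eq:pi-MP*} and then invokes Proposition~\ref{prop:dcp-hopf-map}; your Steps 1 and 2 carry out exactly those checks, with index bookkeeping for \eqref{eq:QT1}, \eqref{eq:QT2} and \eqref{eq:almost_cocom} that all works out, and the pairing form \eqref{eq:sp-MP*} you verify is equivalent to \eqref{eq:pi-MP*} under the actions \eqref{eq:sp-action} via Proposition~\ref{prop:gqd}.
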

\begin{proof}
It is straightforward to check that the map $\pi:K\to H,\,a\mapsto \langle a,s_i\rangle t_i $ is a Hopf algebra homomorphism satisfying Eq.~\eqref{eq:pi-MP*}.
\end{proof}

In the context of Corollary~\ref{coro:db-qt}, let $\pi=\theta\iota_K$, then the braided Hopf algebra $K_\pi$ in ${_H^H}\YD$ is given by Proposition~\ref{prop:gqd} as follows.
\begin{eqnarray*}
a \bullet_{\pi} b
&=& \langle S_K(a_2),s_i\rangle\langle b_1S_K(b_3),t_i\rangle a_1 b_2,
\\[.5em]
S_{\pi}(a) & =&  \langle a_1,s_i\rangle \langle S_K(a_4)S_K^2(a_2),t_i\rangle S_K(a_3),\\[.5em]
\Ad_\pi(a) &=& \pi(a_1S_K(a_3))\otimes a_2 \ =\ t_i \otimes \langle a_1S_K(a_3),s_i\rangle a_2 \ =\ t_i\otimes (s_i\rightharpoonup a).
\end{eqnarray*}

\section{Relative Rota-Baxter operators on Hopf algebras}\label{sec:rrb}

The notion of relative Rota-Baxter operators on (cocommutative) Hopf algebras was originally introduced in \cite{LST}. Later Sciandra broke down the cocommutativity restriction to propose Yetter-Drinfeld relative Rota-Baxter operators~\cite{Sc}. Based on these works, we define a proper version of relative Rota-Baxter operators on Hopf algebras, as an equivalent algebraic structure to double cross products with projections.
\begin{defn}\label{defn:rrb_hopf}
Let $H$ be an ordinary Hopf algebra and let $L$ be a Hopf algebra in the Yetter-Drinfeld module category ${_H^H}\YD$ with respect to an $H$-action $\rightharpoonup$ and the following $H$-coaction
$$\Ad_\pi:L\to H \otimes L,\ a\mapsto \pi(a_1)S_H(\pi(a_3))\otimes a_2,$$
where $\pi:L\to H$ is a coalgebra homomorphism such that
\begin{equation}\label{eq:rrb_hopf}
\pi(a)\pi(b)=\pi(a_1(\pi(a_2)\rightharpoonup b)),\quad \forall a,b\in L.
\end{equation}
Then, the map $\pi$ is called a {\bf (Yetter-Drinfeld) relative Rota-Baxter operator} on $H$ with respect to $(L,\rightharpoonup)$.
\delete{
A {\bf homomorphism} between two relative Rota-Baxter operators
$\pi:L\to H$ and $\pi':L'\to H'$ is a pair of homomorphisms of algebras and coalgebras $(f:H\to H',\,g:L\to L')$ such that
\begin{eqnarray}\label{eq:rrb-homo}
f\pi=\pi'g,\quad g(x\rightharpoonup a)=f(x)\rightharpoonup' g(a) ,\quad \forall\,x\in H,\,a\in L.
\end{eqnarray}}
\end{defn}

\begin{remark}
Comparing with the relative Rota-Baxter operators on cocommutative Hopf algebras defined in \cite[Definition~3.1]{LST}, the setting in Definition~\ref{defn:rrb_hopf} replaces the original cocommutative $H$-module bialgebra $(L,\rightharpoonup)$ by the braided Hopf algebra $(L,\rightharpoonup,\Ad_\pi)$ in ${_H^H}\YD$.
It is a simplified version of the Yetter-Drinfeld relative Rota-Baxter operators in \cite[Definition~4.2]{Sc} by fixing the coaction of $H$ on $L$ as the coadjoint map $\Ad_\pi$ and relaxing condition (4.2) therein, since this condition automatically holds currently by the following lemma.
\end{remark}

\begin{lemma}\label{lem:rrb-action}
Given a relative Rota-Baxter operator $\pi$ on $H$ with respect to $(L,\rightharpoonup)$, Eq.~\eqref{eq:MP5} holds for the pair of maps $(\rightharpoonup,\leftharpoonup)$, where the map $\leftharpoonup:L\otimes H\to L$ is defined by
\begin{equation}\label{eq:rrb_action}
x\leftharpoonup a \coloneqq S_H(\pi(x_1\rightharpoonup a_1))x_2\pi(a_2),\quad \forall a\in L,\,x\in H.
\end{equation}
Correspondingly, $\leftharpoonup$ is a coalgebra homomorphism.
\end{lemma}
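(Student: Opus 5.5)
The plan is to derive Eq.~\eqref{eq:MP5} directly from two structural facts about $L$ as a Hopf algebra in ${_H^H}\YD$. Only the Yetter-Drinfeld compatibility of $\rightharpoonup$ with $\Ad_\pi$ and the coalgebra property of $\pi$ will be needed; Eq.~\eqref{eq:rrb_hopf} should not be required.

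\emph{Step 1: record the two facts.} First, since $\Delta_L$ is a morphism in ${_H^H}\YD$, it is $H$-linear, so $\rightharpoonup$ is a coalgebra map:
$$(x\rightharpoonup a)_1\otimes (x\rightharpoonup a)_2=(x_1\rightharpoonup a_1)\otimes (x_2\rightharpoonup a_2).$$
Second, Eq.~\eqref{eq:yd-mod} for the coaction $\Ad_\pi$, rewritten with the first fact, becomes
\begin{equation*}
\pi(x_1\rightharpoonup a_1)S_H(\pi(x_3\rightharpoonup a_3))\otimes (x_2\rightharpoonup a_2)
= x_1\pi(a_1)S_H(\pi(a_3))S_H(x_3)\otimes (x_2\rightharpoonup a_2).
\end{equation*}
Since $\pi$ is a coalgebra map, $S_H\pi$ is a convolution inverse of $\pi$ on $L$. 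Combined with the first fact, this gives $S_H(\pi(x_1\rightharpoonup a_1))\pi(x_2\rightharpoonup a_2)=\vep_H(x)\vep_L(a)$.

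\emph{Step 2: prove Eq.~\eqref{eq:MP5} by sandwiching.} Take the YD identity with $x,a$ replaced by their middle coproduct factors, using $\Delta^{(4)}$ for both. Multiply it on the left by $S_H(\pi(x_1\rightharpoonup a_1))$ and on the right by $x_5\pi(a_5)$. We then evaluate the resulting element
$$E=S_H(\pi(x_1\rightharpoonup a_1))\,\pi(x_2\rightharpoonup a_2)S_H(\pi(x_4\rightharpoonup a_4))\,x_5\pi(a_5)\otimes (x_3\rightharpoonup a_3)$$
in two ways.

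\begin{itemize}
\item Using the left-hand side of the YD identity, the first two factors collapse to the counit. This gives $E=S_H(\pi(x_2\rightharpoonup a_2))x_3\pi(a_3)\otimes (x_1\rightharpoonup a_1)$, which is the flip of the left side of Eq.~\eqref{eq:MP5}.
\item Using the right-hand side of the YD identity, the factors $S_H(\pi(a_4))S_H(x_4)x_5\pi(a_5)$ collapse. This gives $E=S_H(\pi(x_1\rightharpoonup a_1))x_2\pi(a_2)\otimes(x_3\rightharpoonup a_3)$, which is the flip of the right side.
\end{itemize}

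Comparing the two evaluations yields Eq.~\eqref{eq:MP5}. The main obstacle is the careful bookkeeping of Sweedler indices under coassociativity, and making sure the factors one wants to cancel are adjacent at each stage.

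\emph{Step 3: $\leftharpoonup$ is a coalgebra map.} For the counit, $\vep_H(x\leftharpoonup a)=\vep_H(x)\vep_L(a)$ is immediate. For the coproduct, $S_H$ is an anticoalgebra map and $\pi$, $\rightharpoonup$ are coalgebra maps, so
$$\Delta(x\leftharpoonup a)=S_H(\pi(x_2\rightharpoonup a_2))x_3\pi(a_3)\otimes S_H(\pi(x_1\rightharpoonup a_1))x_4\pi(a_4).$$
We want to turn this into $(x_1\leftharpoonup a_1)\otimes(x_2\leftharpoonup a_2)$. Apply the identity of Step 2 in the form
$$(x_1\rightharpoonup a_1)\otimes S_H(\pi(x_2\rightharpoonup a_2))x_3\pi(a_3)=(x_3\rightharpoonup a_3)\otimes S_H(\pi(x_1\rightharpoonup a_1))x_2\pi(a_2)$$
to the appropriate tensor legs, after first applying $S_H\pi$ to the $L$-leg. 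This swaps the positions of the two $S_H\pi(\cdots)$ factors and puts the indices in the order $1,2 \mid 3,4$, which is exactly $(x_1\leftharpoonup a_1)\otimes(x_2\leftharpoonup a_2)$.
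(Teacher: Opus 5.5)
Your proof is correct and follows the paper's approach. The paper likewise rewrites the Yetter-Drinfeld compatibility of $\rightharpoonup$ with $\Ad_\pi$ and reads off Eq.~\eqref{eq:MP5}, tersely calling it ``equivalent'' via Eq.~\eqref{eq:rrb_action}; your sandwich by $S_H(\pi(x_1\rightharpoonup a_1))$ and $x_5\pi(a_5)$ makes that step explicit, and your Step 3 is the paper's coproduct computation, compressed into a single application of Eq.~\eqref{eq:MP5}.
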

\begin{proof}
Since $(L,\rightharpoonup,\Ad_\pi)$ is a braided Hopf algebra in ${_H^H}\YD$, $(L,\rightharpoonup)$ is particularly an $H$-module coalgebra, and thus
\begin{eqnarray*}
\Ad_\pi(x\rightharpoonup a) &=& \pi(x_1\rightharpoonup a_1)S_H(\pi(x_3\rightharpoonup a_3))\otimes (x_2\rightharpoonup a_2)\\
&\stackrel{\eqref{eq:yd-mod}}{=}& x_1\pi(a_1)S_H(\pi(a_3))S_H(x_3)\otimes (x_2\rightharpoonup a_2),
\end{eqnarray*}
and then by Eq.~\eqref{eq:rrb_action} it is equivalent to
$$(x_2\leftharpoonup a_2) \otimes (x_1\rightharpoonup a_1)=(x_1\leftharpoonup a_1) \otimes (x_2\rightharpoonup a_2),\quad \forall a\in L,\,x\in H.
$$
Namely, Eq.~\eqref{eq:MP5} holds for the pair $(\rightharpoonup,\leftharpoonup)$. Then, we have
\begin{eqnarray*}
\Delta_H(x\leftharpoonup a)&\stackrel{\eqref{eq:rrb_action}}{=}& S_H(\pi(x_2\rightharpoonup a_2))x_3\pi(a_3)\otimes S_H(\pi(x_1\rightharpoonup a_1))x_4\pi(a_4)\\
&=&(x_2\leftharpoonup a_2)\otimes S_H(\pi(x_1\rightharpoonup a_1))x_3\pi(a_3)\\
&\stackrel{\eqref{eq:MP5}}{=}& (x_1\leftharpoonup a_1)\otimes S_H(\pi(x_2\rightharpoonup a_2))x_3\pi(a_3)
\ =\ (x_1\leftharpoonup a_1)\otimes(x_2\leftharpoonup a_2).
\end{eqnarray*}
Also, $\varepsilon_H(x\leftharpoonup a)=\varepsilon_H(x)\varepsilon_L(a)$ by Eq.~\eqref{eq:rrb_action}, so $\leftharpoonup$ is a coalgebra homomorphism.
\end{proof}

Next we show that a double cross product $K\bowtie H$ with a projection onto $H$ naturally induces a relative Rota-Baxter operator on $H$.
\begin{theorem}\label{thm:dcp-rrb}
Given a double cross product with a projection $(K\bowtie H,\theta)$, the map $\pi=\theta\iota_K$ is a relative Rota-Baxter operator on $H$ with respect to $(K_\pi,\rightharpoonup)$, where $K_\pi$ is the braided Hopf algebra in the Yetter-Drinfeld module category ${_H^H}\YD$ defined in Theorem~\ref{thm:dcp-yd-hopf}.
\end{theorem}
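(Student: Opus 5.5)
By Definition~\ref{defn:rrb_hopf}, three things must be checked: $K_\pi$ is a braided Hopf algebra in ${_H^H}\YD$ with action $\rightharpoonup$ and coaction $\Ad_\pi$; $\pi:K_\pi\to H$ is a coalgebra homomorphism; and Eq.~\eqref{eq:rrb_hopf} holds for the product $\bullet_\pi$ of $K_\pi$.

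The first item is exactly the content of Theorem~\ref{thm:dcp-yd-hopf}. That theorem produces the braided Hopf algebra $K_\pi=(K,\bullet_\pi,1_K,\Delta_K,\vep_K,S_\pi)$ in ${_H^H}\YD$ with action $\rightharpoonup$. Its coaction is $a\mapsto \pi(a_1S_K(a_3))\otimes a_2$. Since $\pi$ is a Hopf algebra homomorphism (Proposition~\ref{prop:dcp-hopf-map}), we have $\pi(S_K(a_3))=S_H(\pi(a_3))$. Hence this coaction coincides with $\Ad_\pi(a)=\pi(a_1)S_H(\pi(a_3))\otimes a_2$ as required in Definition~\ref{defn:rrb_hopf}.

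The second item is immediate. The coalgebra of $K_\pi$ is the original coalgebra $(K,\Delta_K,\vep_K)$, and $\pi=\theta\iota_K$ is a composite of Hopf algebra homomorphisms, hence a coalgebra map.

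The real content is Eq.~\eqref{eq:rrb_hopf} with the product $\bullet_\pi$:
\[
\pi\big(a_1\bullet_\pi(\pi(a_2)\rightharpoonup b)\big)=\pi(a)\pi(b),\qquad a,b\in K.
\]
The plan is to expand the left side using Eq.~\eqref{eq:ydp-prod}:
\[
a_1\bullet_\pi(\pi(a_3)\rightharpoonup b)=a_1\big(\pi(S_K(a_2))\rightharpoonup(\pi(a_3)\rightharpoonup b)\big).
\]
Since $\rightharpoonup$ is a module action, the inner expression equals $\pi(S_K(a_2)a_3)\rightharpoonup b=\vep(a_2)b$, so the whole expression reduces to $ab$. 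Multiplicativity of $\pi$ on $K$ then gives $\pi(ab)=\pi(a)\pi(b)$.

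The only point needing care is coassociativity bookkeeping. The $\Delta$ applied to $a$ inside $\bullet_\pi$ must be the same coproduct $\Delta_K$ as in Eq.~\eqref{eq:rrb_hopf}. This is true here because $K_\pi$ keeps $\Delta_K$, so no braided coproduct intervenes.

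So there is essentially no obstacle. If anything deserves attention, it is confirming that the coaction produced in Theorem~\ref{thm:dcp-yd-hopf} matches $\Ad_\pi$ exactly, including the order of factors. I would also note that the operator $\leftharpoonup$ of Lemma~\ref{lem:rrb-action} is then recovered from Eq.~\eqref{eq:pi-MP*}, which confirms consistency with the original matched pair, though this is not needed for the statement.
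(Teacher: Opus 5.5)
Your proposal is correct and follows the paper's proof: invoke Theorem~\ref{thm:dcp-yd-hopf} for the braided Hopf algebra structure, note that $\pi$ is a coalgebra map because $K_\pi$ keeps $\Delta_K$, and expand $\pi(a_1\bullet_\pi(\pi(a_2)\rightharpoonup b))=\pi(a_1(S_H(\pi(a_2))\pi(a_3)\rightharpoonup b))=\pi(ab)=\pi(a)\pi(b)$. The only slip is cosmetic: the left side of your displayed expansion should read $a_1\bullet_\pi(\pi(a_2)\rightharpoonup b)$, with the Sweedler indices relabeled only after $\bullet_\pi$ is expanded.
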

\begin{proof}
According to Theorem~\ref{thm:dcp-yd-hopf}, $K_\pi$ is a Hopf algebra in ${_H^H}\YD$ with respect to the action $\rightharpoonup$ and the coaction $\Ad_\pi$. Since $K_\pi$ has the same coalgebra structure as $K$, $\pi:K_{\pi}\to H$ is a coalgebra homomorphism.
Next for any $a,b\in K$, we have
\begin{eqnarray*}
\pi(a_1\bullet_{\pi} (\pi(a_2)\rightharpoonup b))
&\stackrel{\eqref{eq:ydp-prod}}{=}&
\pi(a_1(\pi (S_K(a_2))\rightharpoonup (\pi(a_3)\rightharpoonup b)))\\
&=& \pi(a_1(S_H(\pi (a_2))\pi(a_3)\rightharpoonup b))
\ =\ \pi(ab)= \pi(a)\pi(b).
\end{eqnarray*}
Namely, Eq.~\eqref{eq:rrb_hopf} holds for $\pi:K_{\pi}\to H$.
Therefore, the map $\pi$ is a relative Rota-Baxter operator on $H$ with respect to $(K_\pi,\rightharpoonup)$.
\end{proof}

Conversely, we show that relative Rota-Baxter operators on Hopf algebras can provide double cross products with projections. Therefore, they are equivalent algebraic structures.
\begin{theorem}\label{thm:rrb-dcp}
Given a relative Rota-Baxter operator $\pi$ on $H$ with respect to $(L,\rightharpoonup)$, we define
\begin{eqnarray}\label{eq:rrb-prod}
a \circ_{\pi} b &=& a_1(\pi (a_2)\rightharpoonup b),\\
\label{eq:rrb-antipode}
T_{\pi}(a) &=& S_H(\pi(a_1))\rightharpoonup S_L(a_2)
\end{eqnarray}
for any $a,b\in L$. Denote $L_{[\pi]}=(L,\circ_{\pi},1_L,\Delta_L,\varepsilon_L,T_{\pi})$.
Then $L_{[\pi]}$ is an ordinary Hopf algebra and $(H,L_{[\pi]},\rightharpoonup,\leftharpoonup)$ is a matched pair of Hopf algebras such that $L_{[\pi]}\bowtie H$ is a double cross product with a projection $\theta=\mu_H(\pi\otimes id_H)$ onto $H$, where the map $\leftharpoonup$ is defined by Eq.~\eqref{eq:rrb_action}.
\end{theorem}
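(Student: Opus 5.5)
The cleanest route avoids verifying the matched pair axioms one by one. Instead I would build the Hopf algebra $A=L_{[\pi]}\bowtie H$ directly as a bosonization and then use the factorization criterion, Proposition~\ref{prop:MP_Hopf}, together with Proposition~\ref{prop:dcp-hopf-map}.

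\textbf{Step 1: the bosonization.} Since $(L,\rightharpoonup,\Ad_\pi)$ is a braided Hopf algebra in ${_H^H}\YD$, the biproduct $L\# H$ is an ordinary Hopf algebra. Its multiplication is \eqref{eq:bp_mul} and its coproduct is \eqref{eq:bp_co} with coaction $\Ad_\pi$. I then define
$$\beta^{-1}:L\otimes H\to L\# H,\quad a\otimes x\mapsto a_1\otimes S_H(\pi(a_2))x,$$
which is invertible with inverse $a\otimes x\mapsto a_1\otimes\pi(a_2)x$ (this reverses the map $\beta$ of Theorem~\ref{thm:dcp-yd-hopf}). Transport the Hopf structure of $L\# H$ to $A:=L\otimes H$ along this bijection.

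\textbf{Step 2: the coalgebra of $A$ is the tensor coalgebra.} One computes $\Delta_{L\#H}(a_1\otimes S_H(\pi(a_2))x)$ using \eqref{eq:bp_co}. The coaction factor $\pi(a_2)S_H(\pi(a_4))$ cancels against the $S_H(\pi(\cdot))$ factors exactly as in the computation of $\Delta_B$ in the proof of Theorem~\ref{thm:dcp-yd-hopf}. This shows the transported coproduct is $\Delta_L\otimes\Delta_H$, using only that $\pi$ is a coalgebra map.

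\textbf{Step 3: the two subalgebras and the factorization.} Transporting the product gives
$$(a\otimes 1)(b\otimes 1)=a_1(\pi(a_2)\rightharpoonup b)\otimes 1\quad\text{(after applying $\id\otimes\varepsilon_H$, the $H$-part collapses)}.$$
To justify the collapse, write $\beta^{-1}(a\otimes1)\beta^{-1}(b\otimes1)=a_1(S_H(\pi(a_3))\rightharpoonup b_1)\otimes S_H(\pi(a_2))S_H(\pi(b_2))$ and apply $\beta$. The $H$-component then becomes $\pi$ of the product times $S_H(\pi(a_2)\pi(b_2))$. Rewriting $\pi(a)\pi(b)$ via \eqref{eq:rrb_hopf}, it should collapse to $1$. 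Hence $\iota_L:L\to A$, $a\mapsto a\otimes 1$, is multiplicative for the product $\circ_\pi$ of \eqref{eq:rrb-prod}. Since $\iota_L$ is injective and $A$ is a Hopf algebra whose coalgebra restricts to $\Delta_L$, $L_{[\pi]}$ is a sub-bialgebra of $A$.

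$L_{[\pi]}$ is also a Hopf algebra: $\iota_L(L)$ is stable under $S_A$. Checking this gives $T_\pi$ via \eqref{eq:braid-anti}-type formulas; alternatively, one can verify directly that $T_\pi$ is a convolution inverse of $\id$ using $\pi(T_\pi(a))=S_H(\pi(a))$.

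$\iota_H$ is clearly a Hopf map. Finally, $(a\otimes1)(1\otimes x)=a\otimes x$ holds because $\beta(a_1\otimes S_H(\pi(a_2))x)=a\otimes x$. So Proposition~\ref{prop:MP_Hopf} yields a matched pair $(H,L_{[\pi]},\rightharpoonup',\leftharpoonup')$ with $A=L_{[\pi]}\bowtie H$.

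\textbf{Step 4: identifying the actions and the projection.} Compute $(1\otimes x)(a\otimes 1)$ in $A$ via $\beta$:
$$\beta\bigl((x_1\rightharpoonup a_1)\otimes x_2S_H(\pi(a_2))\bigr)=(x_1\rightharpoonup a_1)\otimes \pi(x_2\rightharpoonup a_2)x_3S_H(\pi(a_3)).$$
Applying $\id\otimes\varepsilon$ gives $\rightharpoonup'=\rightharpoonup$. Applying $\varepsilon\otimes\id$ gives $\pi(x_1\rightharpoonup a_1)x_2S_H(\pi(a_2))$, which is not yet \eqref{eq:rrb_action}. To match it, I would use $\Ad_\pi$-compatibility, i.e.\ \eqref{eq:MP5} from Lemma~\ref{lem:rrb-action}, together with the antipode. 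I will check the orientation there carefully, since the formula $S_H(\pi(x_1\rightharpoonup a_1))x_2\pi(a_2)$ may require using $T_\pi$ in place of $a$; if necessary I would instead identify $\leftharpoonup'$ through $\theta$ as below.

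For the projection, $\theta=\mu_H(\pi\otimes\id)$ corresponds on $L\#H$ to $\varepsilon\otimes\id$, which is the standard Hopf projection of a biproduct. Hence $\theta$ is a Hopf map with $\theta\iota_H=\id$.

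\textbf{Main obstacle.} I expect the main obstacle to be Step 3's collapse and the exact identification of $\leftharpoonup$ in Step 4. Both depend on combining \eqref{eq:rrb_hopf} with the Yetter--Drinfeld condition for $\Ad_\pi$.
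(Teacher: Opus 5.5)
\textbf{The transport map is inverted.} Your strategy is to realize $L_{[\pi]}\bowtie H$ as the bosonization $L\# H$ and read everything off from Proposition~\ref{prop:MP_Hopf}. That is a genuinely different route from the paper, which checks the bialgebra axioms, the antipode and \eqref{eq:MP1}--\eqref{eq:MP5} by hand. As written, however, it breaks. The map $a\otimes x\mapsto a_1\otimes S_H(\pi(a_2))x$ is $\beta^{-1}$, which goes from $L\# H$ to the double cross product; you use it as a map $A\to L\# H$.

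\emph{Step~2 fails.} We get $\Delta_{L\# H}(a_1\otimes S_H(\pi(a_2))x)=(a_1\otimes \pi(a_2)S_H(\pi(a_4))S_H(\pi(a_6))x_1)\otimes(a_3\otimes S_H(\pi(a_5))x_2)$. The coaction factor meets $S_H(\pi(a_6))$ rather than $\pi(a_5)$, so nothing cancels. The transported coproduct is not $\Delta_L\otimes\Delta_H$ in general.

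\emph{Step~3 fails even for groups.} Take a nonabelian group $G$, $L=H=\bk G$ with the conjugation action (so $\Ad_\pi$ is trivial), and the Rota--Baxter operator $\pi(g)=g^{-1}$. Then $a\circ_\pi b=ba$. Your transported product is $(a\otimes 1)(b\otimes 1)=a^2ba^{-1}\otimes ab^{-1}a^{-1}b$, which lies in $L\otimes 1$ only when $ab=ba$. (Also, $S_H(\pi(a_2))S_H(\pi(b_2))=S_H(\pi(b_2)\pi(a_2))$, not $S_H(\pi(a_2)\pi(b_2))$.)

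\emph{The same inversion causes the other mismatches.} It is why Step~4 produced $\pi(x_1\rightharpoonup a_1)x_2S_H(\pi(a_2))$ instead of \eqref{eq:rrb_action}. It also makes your projection claim false: $\varepsilon_L\otimes\id_H$ pulls back along your map to $a\otimes x\mapsto S_H(\pi(a))x$, not to $\theta$.

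\textbf{The repair.} Transport along $\beta(a\otimes x)=a_1\otimes\pi(a_2)x$ itself.
\begin{itemize}
\item Step~2: now $S_H(\pi(a_4))\pi(a_5)$ cancels, so $A$ carries the tensor coalgebra.
\item Projection: $(\varepsilon_L\otimes\id_H)\beta=\mu_H(\pi\otimes\id_H)=\theta$.
\item Step~4: $(1\otimes x)(a\otimes 1)=(x_1\rightharpoonup a_1)\otimes S_H(\pi(x_2\rightharpoonup a_2))x_3\pi(a_3)$, which yields exactly $\rightharpoonup$ and \eqref{eq:rrb_action}.
\item Step~3: put $z=(a\otimes 1)(b\otimes 1)$. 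Then $(\id_L\otimes\varepsilon_H)(z)=a\circ_\pi b$, and $(\varepsilon_L\otimes\id_H)(z)=S_H(\pi(a_1(\pi(a_2)\rightharpoonup b_1)))\pi(a_3)\pi(b_2)=\varepsilon_L(a)\varepsilon_L(b)1_H$ by \eqref{eq:rrb_hopf}. The product of $A$ is a coalgebra map for the tensor coalgebra, and every element $z$ equals $(\id_L\otimes\varepsilon_H)(z_1)\otimes(\varepsilon_L\otimes\id_H)(z_2)$. Hence $z=(a\circ_\pi b)\otimes 1$.
\item Antipode: a sub-bialgebra of a Hopf algebra need not be Hopf. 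Here, though, the one-line identity $a_1\circ_\pi T_\pi(a_2)=\varepsilon_L(a)1_L$ suffices. By uniqueness of convolution inverses in $\Hom(L,A)$ it forces $\iota_L T_\pi=S_A\iota_L$. Injectivity of $\iota_L$ then gives $T_\pi(a_1)\circ_\pi a_2=\varepsilon_L(a)1_L$.
\end{itemize}

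\textbf{Comparison.} Once corrected, your route obtains associativity of $\circ_\pi$, its compatibility with $\Delta_L$, and all matched pair axioms from the biproduct theorem and the factorization criterion. It also avoids the paper's use of \eqref{eq:yd-antipode}. The paper's direct verification, by contrast, is fully elementary and does not rely on the biproduct theorem.
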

\begin{proof}
First we check that $L_{[\pi]}$ is a Hopf algebra for completeness, so $\pi:L_{[\pi]}\to H$ is a Hopf algebra homomorphism by Eqs.~\eqref{eq:rrb_hopf} and \eqref{eq:rrb-prod}. Since $(L,\rightharpoonup,\Ad_\pi)$ is a braided Hopf algebra in ${_H^H}\YD$, Eq.~\eqref{eq:yd-hopf} is written as
$$\Delta_L(ab)=a_1(\pi(a_2)S_H(\pi(a_4))\rightharpoonup b_1)\otimes a_3b_2,\quad \forall a,b\in L.$$
Then, the product $\circ_\pi$ is a coalgebra homomorphism, since
\begin{align*}
\Delta_L(a\circ_{\pi} b)&=
\Delta_L(a_1(\pi (a_2)\rightharpoonup b))=a_1(\pi(a_2)S_H(\pi(a_4))\rightharpoonup (\pi(a_5)\rightharpoonup b_1))\otimes a_3(\pi(a_6)\rightharpoonup b_2)\\
&=a_1(\pi(a_2)\rightharpoonup b_1)\otimes a_3(\pi(a_4)\rightharpoonup b_2)
= (a_1\circ_{\pi} b_1)\otimes (a_2\circ_{\pi} b_2),\\
\varepsilon_L(a\circ_{\pi} b)&=
\varepsilon_L(a_1(\pi (a_2)\rightharpoonup b))=\varepsilon_L(a_1)\varepsilon_H(\pi (a_2))\varepsilon_L(b)=\varepsilon_L(a)\varepsilon_L(b).
\end{align*}
Correspondingly, we have
\begin{align*}
(a \circ_{\pi} b)\circ_{\pi} c&= (a_1 \circ_{\pi} b_1)(\pi(a_2 \circ_{\pi} b_2)\rightharpoonup c)
\stackrel{\eqref{eq:rrb_hopf}}{=}
a_1(\pi (a_2)\rightharpoonup b_1)(\pi(a_3)\pi(b_2)\rightharpoonup c)\\
&=a_1(\pi (a_2)\rightharpoonup b_1)(\pi(a_3)\rightharpoonup(\pi(b_2)\rightharpoonup c))= a_1(\pi(a_2)\rightharpoonup b_1(\pi(b_2)\rightharpoonup c))\\
&= a \circ_{\pi} (b\circ_{\pi} c).
\end{align*}
As $\pi(1_L)$ is a group-like element in $H$, Eq.~\eqref{eq:rrb_hopf} implies that $\pi(1_L)=1_H$, and so we have seen that $(L,\circ_\pi,1_L,\Delta_L,\varepsilon_L)$ is a bialgebra. Moreover,
\begin{align*}
a_1\circ_\pi T_{\pi}(a_2) &= a_1(\pi(a_2)\rightharpoonup (S_H(\pi(a_3))\rightharpoonup S_L(a_4)))\\
&= a_1(\pi(a_2)S_H(\pi(a_3))\rightharpoonup S_L(a_4))
= a_1S_L(a_2) =\varepsilon_L(a)1_L,
\end{align*}
and then $\pi(a_1)\pi(T_\pi(a_2))\stackrel{\eqref{eq:rrb_hopf}}{=}
\pi(a_1\circ_\pi T_\pi(a_2))=\pi(\varepsilon_L(a)1_L)=\varepsilon_H(\pi(a))1_H$,
or equivalently
$$\pi(T_\pi(a))=S_H(\pi(a)).$$
On the other hand,
\begin{align*}
\Delta_L(T_\pi(a))&=\Delta_L(S_H(\pi(a_1))\rightharpoonup S_L(a_2))
=S_H(\pi(a_1))\rightharpoonup \Delta_L(S_L(a_2))\\
&\stackrel{\eqref{eq:yd-antipode}}{=}
(S_H(\pi(a_2))\rightharpoonup
(\pi(a_3)S_H(\pi(a_5))\rightharpoonup S_L(a_6))) \otimes (S_H(\pi(a_1))\rightharpoonup S_L(a_4))\\
&=(S_H(\pi(a_2))\pi(a_3)S_H(\pi(a_5))\rightharpoonup S_L(a_6)) \otimes (S_H(\pi(a_1))\rightharpoonup S_L(a_4))\\
&=(S_H(\pi(a_3))\rightharpoonup S_L(a_4)) \otimes (S_H(\pi(a_1))\rightharpoonup S_L(a_2))\\
&= T_\pi(a_2)\otimes T_\pi(a_1),
\end{align*}
where the second equality holds as $(L,\rightharpoonup)$ is a left $H$-module coalgebra. Therefore, we have
\begin{align*}
T_{\pi}(a_1)\circ_\pi a_2 &= T_{\pi}(a_2)(\pi(T_{\pi}(a_1))\rightharpoonup a_3)= T_{\pi}(a_2)(S_H(\pi(a_1))\rightharpoonup a_3)\\
&= (S_H(\pi(a_2))\rightharpoonup S_L(a_3))(S_H(\pi(a_1))\rightharpoonup a_4)\\
&= S_H(\pi(a_1))\rightharpoonup S_L(a_2)a_3 = \varepsilon_L(a)1_L,
\end{align*}
where the last two equalities hold as $(L,\rightharpoonup)$ is a left $H$-module algebra.

Next we check that $(H,L_{[\pi]},\rightharpoonup,\leftharpoonup)$ is a matched pair of Hopf algebras. By Lemma~\ref{lem:rrb-action}, $\leftharpoonup$ is a coalgebra homomorphism, and Eq.~\eqref{eq:MP5} holds for the pair of maps $(\rightharpoonup,\leftharpoonup)$. Also,
\begin{align*}
x\rightharpoonup (a\circ_\pi b) &= x\rightharpoonup a_1(\pi (a_2)\rightharpoonup b)\\
&=(x_1\rightharpoonup a_1)(x_2\rightharpoonup(\pi (a_2)\rightharpoonup b))\\
&=(x_1\rightharpoonup a_1)(x_2\pi (a_2)\rightharpoonup b)\\
&\stackrel{\eqref{eq:rrb_action}}{=} (x_1\rightharpoonup a_1)(\pi(x_2 \rightharpoonup a_2)(x_3\leftharpoonup a_3)\rightharpoonup b)\\
&=(x_1\rightharpoonup a_1)(\pi(x_2 \rightharpoonup a_2)
\rightharpoonup((x_3\leftharpoonup a_3)\rightharpoonup b))\\
&=(x_1\rightharpoonup a_1)\circ_\pi((x_2\leftharpoonup a_2)\rightharpoonup b).
\end{align*}
Hence, we have
\begin{eqnarray*}
(x\leftharpoonup a)\leftharpoonup b
&\stackrel{\eqref{eq:rrb_action}}{=}&S_H(\pi((x_1\leftharpoonup a_1)\rightharpoonup b_1))(x_2\leftharpoonup a_2)\pi(b_2)\\
&\stackrel{\eqref{eq:rrb_action}}{=}&S_H(\pi((x_1\leftharpoonup a_1)\rightharpoonup b_1))S_H(\pi(x_2\rightharpoonup a_2))x_3\pi(a_3)\pi(b_2)\\
&\stackrel{\eqref{eq:rrb_hopf}}{=}&
S_H(\pi(x_2\rightharpoonup a_2)\pi((x_1\leftharpoonup a_1)\rightharpoonup b_1))x_3
\pi(a_3\circ_\pi b_2)\\
&\stackrel{\eqref{eq:MP5},\,\eqref{eq:rrb_hopf}}{=}&
S_H(\pi((x_1\rightharpoonup a_1)\circ_\pi((x_2\leftharpoonup a_2)\rightharpoonup b_1)))x_3\pi(a_3\circ_\pi b_2)\\
&=& S_H(\pi(x_1\rightharpoonup (a_1\circ_\pi b_1)))x_2\pi(a_2\circ_\pi b_2)
\ \stackrel{\eqref{eq:rrb_action}}{=}\ x\leftharpoonup (a\circ_\pi b),\\[.5em]
x\leftharpoonup 1_L &\stackrel{\eqref{eq:rrb_action}}{=}&
S_H(\pi(x_1\rightharpoonup 1_L))x_2\pi(1_L)\ =\ \varepsilon_H(x_1)x_2\ =\ x.
\end{eqnarray*}
Therefore, $(H,\leftharpoonup)$ is a right module coalgebra over $L_{[\pi]}$. Also,
\begin{eqnarray*}
(x\leftharpoonup (y_1\rightharpoonup a_1))(y_2\leftharpoonup a_2)
&\stackrel{\eqref{eq:rrb_action}}{=}&
S_H(\pi(x_1\rightharpoonup(y_1\rightharpoonup a_1)))x_2 \pi(y_2\rightharpoonup a_2)(y_3\leftharpoonup a_3)\\
&\stackrel{\eqref{eq:rrb_action}}{=}&
S_H(\pi(x_1y_1\rightharpoonup a_1))x_2y_2\pi(a_2)
\ \stackrel{\eqref{eq:rrb_action}}{=}\
xy \leftharpoonup a,\\[.5em]
1_H \leftharpoonup a &\stackrel{\eqref{eq:rrb_action}}{=}& S_H(\pi(1_H\rightharpoonup a_1))\pi(a_2)\ =\ S_H(\pi(a_1))\pi(a_2)\\
  &=& \varepsilon_H(\pi(a))1_H \ =\ \varepsilon_L(a)1_H.
\end{eqnarray*}
Hence, we have checked that Eqs.~\eqref{eq:MP1}--\eqref{eq:MP4} hold. Moreover, Eq.~\eqref{eq:rrb_action} is clearly equivalent to Eq.~\eqref{eq:pi-MP*}, so $L_{[\pi]}\bowtie H$ is a double cross product with a projection $\theta=\mu_H(\pi\otimes id_H)$ onto $H$ by Proposition~\ref{prop:dcp-hopf-map}.
\end{proof}

When $H$ is finite-dimensional, the Drinfeld double $D(H)=H^{* \rm cop}\bowtie H$ is a quasitriangular Hopf algebra with the $R$-matrix
$$\calr=(\vep\otimes h_i) \otimes (h^i\otimes 1),$$
given by any linear basis $\{h_i\}$ of $H$ and its dual basis $\{h^i\}$ in $H^*$. If $D(H)$ has a projection $\theta$ onto $H$, then $H$ is clearly quasitriangular with the $R$-matrix
$$R=(\theta\otimes \theta)(\calr)=h_i\otimes \pi(h^i),$$
where $\pi=\theta\iota_{H^{* \rm cop}}$.
Since $\Hom(H^*,H)\cong H\otimes H$,
we actually have the following result by Corollary~\ref{coro:db-qt}, Theorem~\ref{thm:dcp-rrb} and Theorem~\ref{thm:rrb-dcp}.
\begin{coro}
Given a finite-dimensional Hopf algebra $H$, the Hopf algebra homomorphisms $\pi:H^{* \rm cop}\to H$ satisfying Eq.~\eqref{eq:sp-MP*}, or equivalently the relative Rota-Baxter operators $\pi$ on $H$ with respect to $(H^{* \rm cop}_\pi,\rightharpoonup)$, are in one-to-one correspondence with the quasitriangular structures on $H$, where the action $\rightharpoonup$ is defined as in Eq.~\eqref{eq:sp-action} via the dual pairing between $H^{* \rm cop}$ and $H$.
\end{coro}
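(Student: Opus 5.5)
The bijection will be assembled from three ingredients: the Drinfeld double correspondence, Corollary~\ref{coro:db-qt}, and the equivalence in Theorems~\ref{thm:dcp-rrb} and \ref{thm:rrb-dcp}. Write $K=H^{*\rm cop}$ with the natural dual pairing $\langle f,x\rangle=f(x)$, which is a skew pairing since $H$ is finite-dimensional (so $S_H$ is bijective). By Proposition~\ref{prop:gqd}, Hopf algebra homomorphisms $\pi:K\to H$ satisfying Eq.~\eqref{eq:sp-MP*} are exactly the maps $\pi=\theta\iota_K$ coming from projections $\theta$ of $D(H)=K\bowtie H$ onto $H$ with $\theta\iota_H=\id_H$. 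The map sending $\theta$ to $\theta\iota_K$ is bijective onto such $\pi$, since $\theta(a\otimes x)=\pi(a)x$. So it suffices to match these $\pi$ with $R$-matrices on $H$.

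For the relative Rota-Baxter reformulation, Theorem~\ref{thm:dcp-rrb} gives that each such $\pi$ is a relative Rota-Baxter operator on $H$ with respect to $(K_\pi,\rightharpoonup)$. Conversely, given a relative Rota-Baxter operator $\pi$ with respect to $(K_\pi,\rightharpoonup)$, I will check that $(K_\pi)_{[\pi]}=K$ as Hopf algebras. Indeed, $a_1\bullet_\pi(\pi(a_2)\rightharpoonup b)=a_1(\pi(S_K(a_2))\pi(a_3)\rightharpoonup b)=ab$, and a similar cancellation handles the antipode. Theorem~\ref{thm:rrb-dcp} then returns the original matched pair and projection, so $\pi$ is a Hopf algebra map satisfying Eq.~\eqref{eq:pi-MP*}, which is Eq.~\eqref{eq:sp-MP*}. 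Hence the two classes of $\pi$ coincide.

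For the correspondence with quasitriangular structures, I use the identification $\Hom(H^*,H)\cong H\otimes H$, $\pi\mapsto R_\pi=h_i\otimes\pi(h^i)$, with inverse $R=s_i\otimes t_i\mapsto(\pi_R:f\mapsto f(s_i)t_i)$. In one direction, given $R$, Corollary~\ref{coro:db-qt} shows that $\pi_R$ is a Hopf homomorphism satisfying Eq.~\eqref{eq:pi-MP*}, hence Eq.~\eqref{eq:sp-MP*}. In the other direction, given $\pi$ with associated projection $\theta$, the map $\theta$ is a surjective Hopf algebra map from the quasitriangular $D(H)$. Therefore $(\theta\otimes\theta)(\calr)$ is an $R$-matrix on $H$: applying $\theta\otimes\theta$ preserves Eqs.~\eqref{eq:almost_cocom}--\eqref{eq:QT2}, using surjectivity for \eqref{eq:almost_cocom} and invertibility of the image. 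This image equals $h_i\otimes\pi(h^i)=R_\pi$. The two assignments are mutually inverse because $\pi_{R_\pi}(f)=f(h_i)\pi(h^i)=\pi(f)$ and $R_{\pi_R}=h_i\otimes h^i(s_j)t_j=R$.

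The main obstacle is the bookkeeping of conventions. One must confirm that $\calr=(\vep\otimes h_i)\otimes(h^i\otimes1)$ is an $R$-matrix for $D(H)$ under exactly the multiplication \eqref{eq:dcp} induced by \eqref{eq:sp-action}, with $H^{*\rm cop}$ and the given skew-pairing axioms. One must also check that the leg order in $\theta\otimes\theta$ matches the $\pi_R$ used in Corollary~\ref{coro:db-qt}, namely $a\mapsto\langle a,s_i\rangle t_i$. If a flip or an $S^{\pm1}$ appears, I would adjust the identification $\Hom(H^*,H)\cong H\otimes H$ accordingly. This does not affect bijectivity.
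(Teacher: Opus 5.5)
Your treatment of the bijection with quasitriangular structures follows the paper's own route, and that part is fine. The paper likewise pushes the canonical $R$-matrix $\calr$ of $D(H)$ forward along $\theta\otimes\theta$, uses Corollary~\ref{coro:db-qt} for the other direction, and invokes $\Hom(H^*,H)\cong H\otimes H$.

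\textbf{The gap.} It lies in your converse for the ``or equivalently'' clause, at the sentence ``Theorem~\ref{thm:rrb-dcp} then returns the original matched pair and projection.'' Theorem~\ref{thm:rrb-dcp} produces $(H,K,\rightharpoonup,\leftharpoonup_\pi)$, whose right action is $x\leftharpoonup_\pi a=S_H(\pi(x_1\rightharpoonup a_1))x_2\pi(a_2)$ from Eq.~\eqref{eq:rrb_action}.
\begin{itemize}
\item For this right action, Eq.~\eqref{eq:pi-MP*} is a tautology.
\item $\leftharpoonup_\pi$ equals the Drinfeld-double right action of Eq.~\eqref{eq:sp-action} exactly when Eq.~\eqref{eq:sp-MP*} holds.
\end{itemize}
So identifying the two matched pairs assumes what you want to prove. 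Agreement of $K$, $H$ and $\rightharpoonup$ does not force agreement of the right actions.

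\textbf{The claimed converse actually fails under the literal reading you use.} Let $H=\mathbb{C}D_4$, with $D_4=\langle r,s\mid r^4=s^2=1,\ srs=r^{-1}\rangle$ and central element $z=r^2$. Let $K=H^{*\rm cop}$ with dual basis $\{\delta_g\}$.
\begin{itemize}
\item Eq.~\eqref{eq:sp-action} gives $g\rightharpoonup\delta_h=\delta_{ghg^{-1}}$ and the trivial right action. So Eq.~\eqref{eq:sp-MP*} reads $\pi(g\rightharpoonup a)=g\pi(a)g^{-1}$.
\item Put $\pi(f)=f(1)\tfrac{1+s}{2}+f(z)\tfrac{1-s}{2}$. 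This is a Hopf algebra map: restriction to the centre $\{1,z\}$ followed by $\mathbb{C}^{\{1,z\}}\cong\mathbb{C}\langle s\rangle$.
\item Since $\pi$ only sees central values, $\pi(g\rightharpoonup a)=\pi(a)$. Hence $g\leftharpoonup_\pi a=S_H(\pi(a_1))\,g\,\pi(a_2)$.
\item One checks that $(H,K,\rightharpoonup,\leftharpoonup_\pi)$ is a matched pair. For Eq.~\eqref{eq:MP1}: the conjugation action of $\mathbb{C}D_4$ on $K$ factors through the commutative algebra $\mathbb{C}[D_4/\{1,z\}]$. There $S_H(\pi(a_1))g\pi(a_2)$ acts as $\vep(a)g$, and $g\rightharpoonup$ is an algebra automorphism of $K$.
\item Apply Proposition~\ref{prop:dcp-hopf-map} and Theorems~\ref{thm:dcp-yd-hopf} and~\ref{thm:dcp-rrb} to this matched pair. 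The formulas for $\bullet_\pi$, $S_\pi$ and the coaction involve only $\rightharpoonup$ and $\pi$. So $\pi$ is a relative Rota--Baxter operator with respect to $(H^{*\rm cop}_\pi,\rightharpoonup)$.
\item But $r\pi(\delta_1)r^{-1}=\tfrac{1+zs}{2}\neq\pi(\delta_1)=\pi(r\rightharpoonup\delta_1)$. So Eq.~\eqref{eq:sp-MP*} fails, and $h_i\otimes\pi(h^i)=1\otimes\tfrac{1+s}{2}+z\otimes\tfrac{1-s}{2}$ is not an $R$-matrix of $\mathbb{C}D_4$.
\end{itemize}

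\textbf{Consequence.} The ``equivalently'' is correct only if $H^{*\rm cop}_\pi$ is understood, as in Theorem~\ref{thm:dcp-yd-hopf}, to presuppose that $\pi$ comes from a projection of $D(H)$ itself. Equivalently, one requires $\leftharpoonup_\pi$ to be the action of Eq.~\eqref{eq:sp-action}. Under that reading Theorem~\ref{thm:dcp-rrb} alone suffices, and no converse through Theorem~\ref{thm:rrb-dcp} is needed. The paper's one-line justification also just cites Theorem~\ref{thm:rrb-dcp} and does not address this point.

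\textbf{A smaller point.} Your cancellation $\pi(S_K(a_2))\pi(a_3)=\vep(a_2)1_H$ uses $\pi S_K=S_H\pi$, which you have not yet established at that stage. It is available: $T_\pi=S_K$ by a one-line computation, and $\pi T_\pi=S_H\pi$ is shown in the proof of Theorem~\ref{thm:rrb-dcp}. But it needs to be said.
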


From this viewpoint, relative Rota-Baxter operators on Hopf algebras may be regarded as a generalization and enrichment of their quasitriangular structures.

\section{Double cross products with projections and matched pairs of actions}\label{sec:mpa}
As mentioned in Remark~\ref{rk:dcp}, a double cross product $H\bowtie H$ with the multiplication $\mu_H$ as the projection equivalently provides a matched pair of actions on a single Hopf algebra $H$. In this section, we generally study the condition when a double cross product with a projection induces a matched pair of actions.

Let $(H,K,\rightharpoonup,\leftharpoonup)$ be a matched pair of Hopf algebras
and let $\pi:K\to H$ be a Hopf algebra homomorphism satisfying Eq.~\eqref{eq:pi-MP*}  as in Proposition~\ref{prop:dcp-hopf-map}. Define two binary products $\rightharpoonup_\pi$ and $\leftharpoonup_\pi$ on $K$ as follows:
\begin{align}
\label{eq:pi-l-action}
&a\rightharpoonup_\pi b\coloneqq \pi(a) \rightharpoonup b,\\
\label{eq:pi-r-action}
&a\leftharpoonup_\pi b\coloneqq S_K(\pi(a_1) \rightharpoonup b_1)a_2b_2
\end{align}
for any $a,b\in K$.

\begin{theorem}\label{thm:dcp-mpa}
In the above setting, if the map $\leftharpoonup_\pi$ is a coalgebra homomorphism,
then the triple
$(K,\rightharpoonup_\pi,\leftharpoonup_\pi)$ is a matched pair of actions on $K$.
\end{theorem}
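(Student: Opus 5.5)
We need to verify (MP1)–(MP5) and (MP*) for $(K,K,\rightharpoonup_\pi,\leftharpoonup_\pi)$. The cleanest route is the factorization criterion, Proposition~\ref{prop:MP_Hopf}, applied inside $K\bowtie K$ built from a known double cross product. A more direct plan, however, is to transport everything from the matched pair $(H,K,\rightharpoonup,\leftharpoonup)$ along the Hopf algebra homomorphism $\pi$, using Eq.~\eqref{eq:pi-MP*} as the bridge.

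First, $\rightharpoonup_\pi$ is the pullback of the $H$-module coalgebra action $\rightharpoonup$ along the Hopf map $\pi$. Hence it is a left $K$-module coalgebra action, and (MP2) holds trivially. Next, (MP*) is immediate from the definition \eqref{eq:pi-r-action}:
\[(a_1\rightharpoonup_\pi b_1)(a_2\leftharpoonup_\pi b_2)=(\pi(a_1)\rightharpoonup b_1)S_K(\pi(a_2)\rightharpoonup b_2)a_3b_3=ab,\]
using that $\rightharpoonup$ is a coalgebra map. With (MP*) in hand, (MP5) follows as in the single-algebra case. The product $ab=(a_1\rightharpoonup_\pi b_1)(a_2\leftharpoonup_\pi b_2)$ together with the coalgebra property of $\leftharpoonup_\pi$ (the hypothesis) lets us compare $\Delta(ab)$ computed two ways. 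Concretely, we apply $\Delta_K$ to (MP*), using that $\leftharpoonup_\pi$, $\rightharpoonup_\pi$ and $\mu_K$ are coalgebra maps. This yields $a_1b_1\otimes a_2b_2=(a_1\rightharpoonup_\pi b_1)(a_3\leftharpoonup_\pi b_3)\otimes(a_2\rightharpoonup_\pi b_2)(a_4\leftharpoonup_\pi b_4)$. We then convolve with $S_K$ on the appropriate legs to isolate $(a_1\rightharpoonup_\pi b_1)\otimes(a_2\leftharpoonup_\pi b_2)=(a_2\rightharpoonup_\pi b_2)\otimes(a_1\leftharpoonup_\pi b_1)$.

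For (MP1) we use $a\rightharpoonup_\pi bc=\pi(a)\rightharpoonup bc$ and expand with (MP1) for the original pair:
\[(\pi(a)_1\rightharpoonup b_1)((\pi(a)_2\leftharpoonup b_2)\rightharpoonup c).\]
So it suffices to show $\pi(a\leftharpoonup_\pi b)=\pi(a)\leftharpoonup b$, modulo the legs. This follows from \eqref{eq:pi-MP*}, which gives $\pi(x\leftharpoonup b)$-type identities. Indeed, $\pi(a\leftharpoonup_\pi b)=S_H(\pi(\pi(a_1)\rightharpoonup b_1))\pi(a_2)\pi(b_2)$, and by \eqref{eq:pi-MP*} with $x=\pi(a)$ this equals $\pi(a)\leftharpoonup b$, after using the coalgebra property of $\leftharpoonup$. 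So $\pi$ intertwines $\leftharpoonup_\pi$ and $\leftharpoonup$. Then (MP1) for $\rightharpoonup_\pi$ follows.

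The conditions (MP3) and (MP4) remain: $\leftharpoonup_\pi$ must be a right action with the cocycle-type product rule. For (MP4), $1\leftharpoonup_\pi b=S_K(b_1)b_2=\vep(b)1$. For (MP3) and associativity of the action, I would avoid direct expansion and instead use (MP*) as a cancellation device, as in \cite{FS,Li}. From (MP*) and (MP1) we have $a(bc)=(a_1\rightharpoonup_\pi b_1c_1)(a_2\leftharpoonup_\pi b_2c_2)$. Expanding $(ab)c$ instead via (MP*) twice and using (MP1) and (MP5), we compare the two expressions. Since $\rightharpoonup_\pi$-factors are convolution invertible, cancelling them forces $a\leftharpoonup_\pi bc=(a\leftharpoonup_\pi b)\leftharpoonup_\pi c$. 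Similarly, expanding $(ab)c$ and $a(bc)$ in the other order gives the identity (MP3): $ab\leftharpoonup_\pi c=(a\leftharpoonup_\pi(b_1\rightharpoonup_\pi c_1))(b_2\leftharpoonup_\pi c_2)$.

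This cancellation step is the main obstacle. It requires that $\leftharpoonup_\pi$ be a coalgebra map so that Sweedler legs can be rearranged, which is exactly the hypothesis. It also requires careful bookkeeping of (MP5) to move legs into the right order before multiplying by $S_K$ of the $\rightharpoonup_\pi$ part. If the cancellation proves messy, the fallback is the direct computation of $(a\leftharpoonup_\pi b)\leftharpoonup_\pi c$ from \eqref{eq:pi-r-action}. That computation would use the intertwining $\pi(a\leftharpoonup_\pi b)=\pi(a)\leftharpoonup b$, the action property of $\leftharpoonup$, and (MP1) for $\rightharpoonup$.
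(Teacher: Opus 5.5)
Your proposal is correct, but it is organized differently from the paper's proof.

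\textbf{What the paper does.} It checks by hand only three things: that $\rightharpoonup_\pi$ is a left module coalgebra action, that \eqref{eq:MP*} holds, and that $\leftharpoonup_\pi$ is a right action. For the last, it expands $\Delta_K(a\leftharpoonup_\pi b)$ directly via the anti-coalgebra property of $S_K$. It then identifies $\pi(S_K(\pi(a_2)\rightharpoonup b_2)a_3b_3)=\pi(a_2)\leftharpoonup b_2$ using \eqref{eq:pi-MP*}, and finishes with \eqref{eq:MP1} for the original pair. Finally it quotes \cite[Theorem~4.1]{Li0}, which yields \eqref{eq:MP1}, \eqref{eq:MP3} and \eqref{eq:MP5} from two module coalgebra actions satisfying \eqref{eq:MP*}.

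\textbf{What you do instead.} You derive every axiom inline:
\begin{enumerate}[(i)]
\item \eqref{eq:MP1} from the intertwining identity $\pi(a\leftharpoonup_\pi b)=\pi(a)\leftharpoonup b$, which the paper only records later, in the proof of Corollary~\ref{coro:injective};
\item \eqref{eq:MP5} by comultiplying \eqref{eq:MP*} and cancelling with $S_K$, essentially one direction of \cite[Lemma~3.6]{FS};
\item associativity of $\leftharpoonup_\pi$ and \eqref{eq:MP3} by comparing $(ab)c$ with $a(bc)$.
\end{enumerate}

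\textbf{Your cancellations are valid.} In the associativity comparison the common left factor is $(a_1\rightharpoonup_\pi b_1)((a_2\leftharpoonup_\pi b_2)\rightharpoonup_\pi c)$. By your \eqref{eq:MP1} this equals $a\rightharpoonup_\pi bc$, which is a coalgebra map $K^{\otimes 3}\to K$ and hence convolution invertible. So \eqref{eq:MP5} is not actually needed at that step. For \eqref{eq:MP3} the common factor is $a\rightharpoonup_\pi(b\rightharpoonup_\pi c)$, again a coalgebra map.

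\textbf{Trade-offs.} Your route is self-contained and in effect reproves \cite[Theorem~4.1]{Li0} in this special setting; the paper's is shorter. However, your associativity argument needs $\leftharpoonup_\pi$ to be a coalgebra map, in order to split $(a_2\leftharpoonup_\pi b_2)c$ via \eqref{eq:MP*}. The paper's direct computation instead shows that $\leftharpoonup_\pi$ is a right action unconditionally. That unconditional fact is exactly what Theorem~\ref{thm:doi-hopf} later reuses without the coalgebra hypothesis.

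\textbf{Minor omission.} You skip the unit axiom $a\leftharpoonup_\pi 1=a$. It is immediate from \eqref{eq:pi-r-action} and \eqref{eq:MP2}.
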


\begin{proof}
By definition, $\rightharpoonup_\pi$ is a left $K$-module coalgebra action on itself, and Eq.~\eqref{eq:MP*} holds for the pair of maps $(\rightharpoonup_\pi,\leftharpoonup_\pi)$.
\delete{
Write $S=S_K$ for short.
\begin{eqnarray*}
a\rightharpoonup_\pi b\bullet_\pi c
&\stackrel{\eqref{eq:ydp-prod}}{=}& \pi(a)\rightharpoonup b_1(\pi S(b_2)\rightharpoonup c)\\
&=& (\pi(a_1)\rightharpoonup b_1)((\pi(a_2)\leftharpoonup b_2)\rightharpoonup(\pi S(b_3)\rightharpoonup c))\\
&=& (\pi(a_1)\rightharpoonup b_1)((\pi(a_2)\leftharpoonup b_2)\pi S(b_3)\rightharpoonup c)\\
&\stackrel{\eqref{eq:pi-MP*}}{=}& (\pi(a_1)\rightharpoonup b_1)(S_H\pi(\pi(a_2)\rightharpoonup b_2)\pi(a_3)\pi(b_3)\pi S(b_4)\rightharpoonup c)\\
&=& (\pi(a_1)\rightharpoonup b_1)(\pi S(\pi(a_2)\rightharpoonup b_2)\rightharpoonup(\pi(a_3)\rightharpoonup c))\\
&=& (a_1\rightharpoonup_\pi b)\bullet_\pi(a_2\rightharpoonup_\pi c)\\[.5em]
a\rightharpoonup_\pi (b\rightharpoonup_\pi c)
&=& \pi(ab) \rightharpoonup c\\
&\stackrel{\eqref{eq:ydp-prod}}{=}& \pi(a_1 \bullet_{\pi} (\pi(a_2)\rightharpoonup  b)) \rightharpoonup c\\
&=& a_1 \bullet_{\pi}(a_2\rightharpoonup_\pi b)\rightharpoonup_\pi c
\end{eqnarray*}}

Next we check that $\leftharpoonup_\pi$ is a right $K$-module coalgebra action on itself. By the assumption, we only need to check that
it is a right $K$-module action as follows.
\begin{eqnarray*}
(a\leftharpoonup_\pi b)\leftharpoonup_\pi c
&\stackrel{\eqref{eq:pi-r-action}}{=}& S_K\big(S_K(a_2 \rightharpoonup_\pi b_2)a_3b_3\rightharpoonup_\pi c_1\big)S_K(a_1 \rightharpoonup_\pi b_1)a_4b_4c_2\\
&=& S_K\big((a_1 \rightharpoonup_\pi b_1)\big(S_K(a_2 \rightharpoonup_\pi b_2)a_3b_3\rightharpoonup_\pi c_1\big)\big)a_4b_4c_2\\
&\stackrel{\eqref{eq:pi-l-action}}{=}&
S_K\big((\pi(a_1) \rightharpoonup b_1)\big(S_H(\pi(\pi(a_2) \rightharpoonup b_2))\pi(a_3)\pi(b_3)\rightharpoonup c_1\big)\big)a_4b_4c_2\\
&\stackrel{\eqref{eq:pi-MP*}}{=}&
S_K\big((\pi(a_1) \rightharpoonup b_1)\big((\pi(a_2) \leftharpoonup b_2)\rightharpoonup c_1\big)\big)a_3b_3c_2\\
&\stackrel{\eqref{eq:MP1}}{=}&
S_K(\pi(a_1) \rightharpoonup b_1c_1)a_2(b_2c_2) \ \stackrel{\eqref{eq:pi-r-action}}{=} \ a\leftharpoonup_\pi bc,\\[.5em]
a\leftharpoonup_\pi 1 &\stackrel{\eqref{eq:pi-r-action}}{=}& S_K(a_1 \rightharpoonup_\pi 1)a_2 \ \stackrel{\eqref{eq:MP2}}{=}\ \varepsilon_H(\pi(a_1))a_2 \ =\ \varepsilon_K(a_1)a_2 \ =\ a.
\end{eqnarray*}

Consequently, according to \cite[Theorem~4.1]{Li0}, we obtain that $(K,\rightharpoonup_\pi,\leftharpoonup_\pi)$ is a matched pair of actions on $K$.
\end{proof}

\begin{remark}
The result in \cite[Lemma~3.6]{FS} tells us that the condition of
$\leftharpoonup_\pi$ being a coalgebra homomorphism in Theorem~\ref{thm:dcp-mpa} is equivalent to condition \eqref{eq:MP5} for the pair of maps $(\rightharpoonup_\pi,\leftharpoonup_\pi)$. In this situation, the pair $(K_\pi,\rightharpoonup_\pi)$ is a Yetter-Drinfeld post-Hopf algebra~(\cite[Definition~3.1]{Sc}), where $K_\pi$ is constructed in Theorem~\ref{thm:dcp-yd-hopf}. Moreover, the two functors $((L,\rightharpoonup)\, \mapsto\, \id_L:L\to L_\rightharpoonup)$ and $(\pi:L\to H\,\mapsto\,  (L,\rightharpoonup_\pi))$ provide an adjunction between relative Rota-Baxter operators $\pi$ with coalgebra homomorphisms $\leftharpoonup_\pi$  and  Yetter-Drinfeld post-Hopf algebras.
\end{remark}

\begin{coro}\label{coro:cocomm}
If $K$ is cocommutative, then $(K,\rightharpoonup_\pi,\leftharpoonup_\pi)$ is a matched pair of actions on $K$.
\end{coro}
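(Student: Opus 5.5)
By Theorem~\ref{thm:dcp-mpa}, the corollary follows once we show that $\leftharpoonup_\pi$ is a coalgebra homomorphism $K\otimes K\to K$ whenever $K$ is cocommutative.

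First, the counit. Using $\varepsilon_K S_K=\varepsilon_K$ and the fact that $\rightharpoonup$ is a coalgebra map, we get
\[
\varepsilon_K(a\leftharpoonup_\pi b)=\varepsilon_H(\pi(a_1))\varepsilon_K(b_1)\varepsilon_K(a_2)\varepsilon_K(b_2)=\varepsilon_K(a)\varepsilon_K(b).
\]

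Next, the coproduct. When $K$ is cocommutative, $K$ is a cocommutative Hopf algebra. Hence $S_K$ is a coalgebra map, i.e. $\Delta S_K=(S_K\otimes S_K)\Delta$, and the multiplication of $K$ is a coalgebra map. Moreover, $a\otimes b\mapsto a\rightharpoonup_\pi b=\pi(a)\rightharpoonup b$ is a coalgebra map, since $\pi$ and $\rightharpoonup$ are. So
\[
\Delta(a\leftharpoonup_\pi b)=S_K(\pi(a_1)\rightharpoonup b_1)a_3b_3\otimes S_K(\pi(a_2)\rightharpoonup b_2)a_4b_4 .
\]
Cocommutativity of $K$ lets us freely permute the Sweedler indices of $a$ and of $b$. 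Swapping indices $2$ and $3$ of both $a$ and $b$ gives
\[
S_K(\pi(a_1)\rightharpoonup b_1)a_2b_2\otimes S_K(\pi(a_3)\rightharpoonup b_3)a_4b_4=(a_1\leftharpoonup_\pi b_1)\otimes(a_2\leftharpoonup_\pi b_2).
\]
This is exactly the coalgebra homomorphism property, so Theorem~\ref{thm:dcp-mpa} applies and the corollary follows.

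The one point needing care is that $\Delta S_K=(S_K\otimes S_K)\Delta$ really holds: in general $\Delta S_K=(S_K\otimes S_K)\Delta^{\rm op}$, and cocommutativity of $K$ makes $\Delta^{\rm op}=\Delta$. Note that no cocommutativity of $H$ is needed, because every permutation of indices above takes place inside $K$.
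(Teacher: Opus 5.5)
Your proposal is correct and follows the same route as the paper, which also deduces the corollary from Theorem~\ref{thm:dcp-mpa} by observing that $\leftharpoonup_\pi$ is a coalgebra homomorphism when $K$ is cocommutative; the paper simply calls this clear. Your counit check and your computation of $\Delta(a\leftharpoonup_\pi b)$, using $\Delta S_K=(S_K\otimes S_K)\Delta$ and permuting Sweedler indices within $K$, supply exactly the details the paper omits.
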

\begin{proof}
The right action $\leftharpoonup_\pi$ is clearly a coalgebra homomorphism when $K$ is cocommutative, so $(K,\rightharpoonup_\pi,\leftharpoonup_\pi)$ is a matched pair of actions on $K$ by Theorem~\ref{thm:dcp-mpa}.
\end{proof}

\begin{coro}\label{coro:injective}
If $\pi$ is injective, then $(K,\rightharpoonup_\pi,\leftharpoonup_\pi)$ is a matched pair of actions on $K$.
\end{coro}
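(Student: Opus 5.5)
The plan is to reduce the claim to Theorem~\ref{thm:dcp-mpa}. That theorem already says $(K,\rightharpoonup_\pi,\leftharpoonup_\pi)$ is a matched pair of actions on $K$ as soon as $\leftharpoonup_\pi$ is a coalgebra homomorphism. So it suffices to show that, when $\pi$ is injective, $\leftharpoonup_\pi$ is a coalgebra map. The idea is to push everything into $H$ through $\pi$, where the corresponding statement is the known fact that $\leftharpoonup$ is a coalgebra map, and then use injectivity to pull it back.

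\emph{Step 1: transport $\leftharpoonup_\pi$ along $\pi$.} Since $\pi$ is a Hopf algebra homomorphism, we have $\pi S_K=S_H\pi$. Hence, by Eq.~\eqref{eq:pi-r-action},
$$\pi(a\leftharpoonup_\pi b)=S_H\big(\pi(\pi(a_1)\rightharpoonup b_1)\big)\,\pi(a_2)\,\pi(b_2).$$
Next I need the identity
$$x\leftharpoonup b=S_H(\pi(x_1\rightharpoonup b_1))\,x_2\,\pi(b_2),\qquad x\in H,\ b\in K.$$
The paper notes at the end of the proof of Theorem~\ref{thm:rrb-dcp} that this is equivalent to Eq.~\eqref{eq:pi-MP*}. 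To derive it directly, I would multiply Eq.~\eqref{eq:pi-MP*} on the left by $S_H(\pi(\cdot\rightharpoonup\cdot))$ in convolution form. This uses that $x\otimes b\mapsto (x_1\rightharpoonup b_1)\otimes(x_2\leftharpoonup b_2)$ is a coalgebra map $H\otimes K\to K\otimes H$, together with Eq.~\eqref{eq:MP5}, to separate the Sweedler legs.

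Setting $x=\pi(a)$ and using $\Delta_H\pi=(\pi\otimes\pi)\Delta_K$, I expect to obtain
$$\pi(a\leftharpoonup_\pi b)=\pi(a)\leftharpoonup b .$$

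\emph{Step 2: comultiplicativity after applying $\pi$.} Since $\pi$ is a coalgebra map and $\leftharpoonup$ is a coalgebra map (it is part of the matched pair data), Step~1 gives
$$(\pi\otimes\pi)\Delta_K(a\leftharpoonup_\pi b)=\Delta_H(\pi(a)\leftharpoonup b)=(\pi(a_1)\leftharpoonup b_1)\otimes(\pi(a_2)\leftharpoonup b_2).$$
Applying Step~1 again to each factor, the right-hand side equals
$$(\pi\otimes\pi)\big((a_1\leftharpoonup_\pi b_1)\otimes(a_2\leftharpoonup_\pi b_2)\big).$$
Over a field, the tensor product $\pi\otimes\pi$ of injective linear maps is injective. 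Therefore
$$\Delta_K(a\leftharpoonup_\pi b)=(a_1\leftharpoonup_\pi b_1)\otimes(a_2\leftharpoonup_\pi b_2).$$
The counit property $\varepsilon_K(a\leftharpoonup_\pi b)=\varepsilon_K(a)\varepsilon_K(b)$ is immediate from Eq.~\eqref{eq:pi-r-action}, because $\rightharpoonup$ is counital and $S_K$ preserves the counit. Theorem~\ref{thm:dcp-mpa} then finishes the proof.

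The main obstacle is the Sweedler bookkeeping in Step~1: solving Eq.~\eqref{eq:pi-MP*} for $x\leftharpoonup b$ requires correctly ordering the legs of $x$ and $b$ via the coalgebra-map property and Eq.~\eqref{eq:MP5}. This is exactly the equivalence the paper already invokes in Theorem~\ref{thm:rrb-dcp}, and I would cite it there rather than redo it.
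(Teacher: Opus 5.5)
Your proposal is correct and is essentially the paper's proof: the paper checks $\varepsilon_K(a\leftharpoonup_\pi b)=\varepsilon_K(a)\varepsilon_K(b)$ directly, derives $\pi(a\leftharpoonup_\pi b)=\pi(a)\leftharpoonup b$ from Eq.~\eqref{eq:pi-MP*}, uses that $\leftharpoonup$ and $\pi$ are coalgebra maps together with injectivity of $\pi$ to conclude that $\leftharpoonup_\pi$ is comultiplicative, and then invokes Theorem~\ref{thm:dcp-mpa}. One small point: solving Eq.~\eqref{eq:pi-MP*} for $x\leftharpoonup b=S_H(\pi(x_1\rightharpoonup b_1))x_2\pi(b_2)$ needs only that $\rightharpoonup$ is a coalgebra map and that $S_H\pi$ is the convolution inverse of $\pi$, so Eq.~\eqref{eq:MP5} is not required there.
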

\begin{proof}
For any $a,b\in K$, we have
$$\varepsilon_K(a\leftharpoonup_\pi b) \stackrel{\eqref{eq:pi-r-action}}{=} \varepsilon_K(S_K(a_1 \rightharpoonup_\pi b_1)a_2b_2) = \varepsilon_K(a \rightharpoonup_\pi b) = \varepsilon_K(a)\varepsilon_K(b).$$
Moreover, since
$$\pi(a)\leftharpoonup b \stackrel{\eqref{eq:pi-MP*}}{=} S_H(\pi(\pi(a_1)\rightharpoonup b_1))\pi(a_2)\pi(b_2)
=\pi(S_K(\pi(a_1)\rightharpoonup b_1)a_2b_2)\stackrel{\eqref{eq:pi-r-action}}{=} \pi(a\leftharpoonup_\pi b),$$
we have
\begin{eqnarray*}
(\pi\otimes\pi)(\Delta_K(a\leftharpoonup_\pi b))&=&
\Delta_H(\pi(a\leftharpoonup_\pi b))\ =\ \Delta_H(\pi(a)\leftharpoonup b)\\
&=&
(\pi(a_1)\leftharpoonup b_1)\otimes(\pi(a_2)\leftharpoonup b_2)
\ =\ (\pi\otimes\pi)((a_1\leftharpoonup_\pi b_1)\otimes(a_2\leftharpoonup_\pi b_2)).
\end{eqnarray*}
So the injectivity of $\pi$ implies that $\leftharpoonup_\pi$ is a coalgebra homomorphism. According to Theorem~\ref{thm:dcp-mpa}, $(K,\rightharpoonup_\pi,\leftharpoonup_\pi)$ is a matched pair of actions on $K$.
\end{proof}

\begin{remark}
In the context of Corollary~\ref{coro:injective}, we actually obtain injective relative Rota-Baxter operators on Hopf algebras by Theorem~\ref{thm:dcp-rrb}, and the adjunction between them and Yetter-Drinfeld post-Hopf algebras has been given in~\cite[Proposition~4.20]{Sc}.
\end{remark}

Given a matched pair of Hopf algebras $(H,K,\rightharpoonup,\leftharpoonup)$, we clearly have the left $K$-comodule algebra $K$ via its coproduct $\Delta_K$, and the right $K$-module coalgebra $H$ with respect to the right action $\leftharpoonup$ of $K$ on $H$.
\begin{theorem}\label{thm:doi-hopf}
Given a matched pair of Hopf algebras $(H,K,\rightharpoonup,\leftharpoonup)$ and a Hopf algebra homomorphism $\pi:K\to H$ satisfying Eq.~\eqref{eq:pi-MP*},
the map $\pi:K\to H$ is a morphism
of $(K,K,H)$-Doi-Hopf modules, where
$H$ is a $(K,K,H)$-Doi-Hopf module with respect to the right $K$-action $\leftharpoonup$ and the left $H$-coaction $\Delta_H$, and
$K$ is a $(K,K,H)$-Doi-Hopf module with respect to the right $K$-action $\leftharpoonup_\pi$ and the left $H$-coaction $(\pi\otimes\id_K)\Delta_K$.
\end{theorem}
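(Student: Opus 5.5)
The proof splits into three verifications: $H$ is a Doi-Hopf module, $K$ is a Doi-Hopf module, and $\pi$ intertwines both structures.

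\textbf{$H$ is a $(K,K,H)$-Doi-Hopf module.} Here $(H,\leftharpoonup)$ is a right $K$-module by \eqref{eq:MP3}--\eqref{eq:MP4}, and $(H,\Delta_H)$ is trivially a left $H$-comodule. The compatibility \eqref{eq:doi-hopf} with $A=K$, coaction $\Delta_K$, and $C=H$ reads
$$\Delta_H(x\leftharpoonup a)=(x_1\leftharpoonup a_1)\otimes(x_2\leftharpoonup a_2).$$
This is exactly the statement that $\leftharpoonup$ is a coalgebra map, which holds in a matched pair.

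\textbf{$K$ is a $(K,K,H)$-Doi-Hopf module.} The map $(\pi\otimes\id_K)\Delta_K$ is a left $H$-comodule structure because $\pi$ is a coalgebra map. The right-module property of $\leftharpoonup_\pi$ is exactly the computation already carried out in the proof of Theorem~\ref{thm:dcp-mpa}, using \eqref{eq:pi-MP*}, \eqref{eq:MP1} and \eqref{eq:MP2}. That computation did not use the coalgebra-homomorphism hypothesis, so I will invoke it verbatim.

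The remaining compatibility is
$$\pi\big((a\leftharpoonup_\pi b)_1\big)\otimes (a\leftharpoonup_\pi b)_2=(\pi(a_1)\leftharpoonup b_1)\otimes(a_2\leftharpoonup_\pi b_2).$$
I expect this to be the main obstacle, since $\leftharpoonup_\pi$ need not be a coalgebra map. I would expand the left side with \eqref{eq:pi-r-action}:
$$\Delta_K(a\leftharpoonup_\pi b)=S_K\big(\pi(a_2)\rightharpoonup b_2\big)a_3b_3\otimes S_K\big(\pi(a_1)\rightharpoonup b_1\big)a_4b_4 .$$
This uses the anti-comultiplicativity of $S_K$ and the fact that $\rightharpoonup$ is a module coalgebra map.

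Applying $\pi$ to the first factor gives $S_H(\pi(\pi(a_2)\rightharpoonup b_2))\pi(a_3)\pi(b_3)$. Equation \eqref{eq:pi-MP*}, in the form $x\leftharpoonup b=S_H(\pi(x_1\rightharpoonup b_1))x_2\pi(b_2)$ with $x=\pi(a_2)$, turns this into $\pi(a_2)\leftharpoonup b_2$. The left side therefore becomes
$$(\pi(a_2)\leftharpoonup b_2)\otimes S_K\big(\pi(a_1)\rightharpoonup b_1\big)a_3b_3.$$
The index pattern is now swapped relative to the target. Here I apply \eqref{eq:MP5} to $x=\pi(a_1)$ to interchange the factors $\pi(a_1)\rightharpoonup b_1$ and $\pi(a_2)\leftharpoonup b_2$. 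After relabeling Sweedler indices this yields $(\pi(a_1)\leftharpoonup b_1)\otimes S_K(\pi(a_2)\rightharpoonup b_2)a_3b_3$, which equals the right side. Care is needed to apply \eqref{eq:MP5} to the correctly grouped tensor legs, keeping $a_3b_3$ as spectators.

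\textbf{$\pi$ is a morphism.} It is a comodule map because $\Delta_H\pi=(\pi\otimes\pi)\Delta_K$, that is, $(\id_H\otimes\pi)(\pi\otimes\id_K)\Delta_K=\Delta_H\pi$. It is a module map because $\pi(a\leftharpoonup_\pi b)=\pi(a)\leftharpoonup b$. That identity was already derived in the proof of Corollary~\ref{coro:injective} from \eqref{eq:pi-MP*} and multiplicativity of $\pi$, without using injectivity.
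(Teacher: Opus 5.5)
Your proposal is correct and follows the paper's proof essentially step for step. Both arguments expand $\Delta_K(a\leftharpoonup_\pi b)$ in the same way, collapse the first leg via \eqref{eq:pi-MP*} to $\pi(a_2)\leftharpoonup b_2$, and swap the legs using \eqref{eq:MP5}. Both also reuse the module-action computation from Theorem~\ref{thm:dcp-mpa} and the identity $\pi(a\leftharpoonup_\pi b)=\pi(a)\leftharpoonup b$ from Corollary~\ref{coro:injective}. One small slip: the right $K$-module property of $(H,\leftharpoonup)$ is part of the definition of a matched pair, not a consequence of \eqref{eq:MP3}--\eqref{eq:MP4}.
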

\begin{proof}
First $(H,\leftharpoonup,\Delta_H)$ is a $(K,K,H)$-Doi-Hopf module, since Eq.~\eqref{eq:doi-hopf} is written as
$$\Delta_H(x\leftharpoonup a)=(x_1\leftharpoonup a_1)\otimes (x_2\leftharpoonup a_2),\quad \forall x\in H,\ a\in K,$$
which holds for the coalgebra map $\leftharpoonup:H\otimes K\to H$.

On the other hand, we have shown previously that $\leftharpoonup_\pi$ is a right module action of $K$ on itself, and $\pi:(K,\leftharpoonup_\pi)\to (H,\leftharpoonup)$ is a right $K$-module map, that is, $\pi(a\leftharpoonup_\pi b)=\pi(a)\leftharpoonup b$.
So Eq.~\eqref{eq:doi-hopf} also holds for $(K,\leftharpoonup_\pi,(\pi\otimes\id_K)\Delta_K)$ as follows.
\begin{eqnarray*}
(\pi\otimes\id_K)\Delta_K(a\leftharpoonup_\pi b) &\stackrel{\eqref{eq:pi-r-action}}{=}& \pi(S_K(\pi(a_2) \rightharpoonup b_2)a_3b_3) \otimes (S_K(\pi(a_1) \rightharpoonup b_1)a_4b_4)\\
&=& \pi(a_2\leftharpoonup_\pi b_2) \otimes (S_K(\pi(a_1) \rightharpoonup b_1)a_3b_3)\\
&=& (\pi(a_2) \leftharpoonup b_2) \otimes (S_K(\pi(a_1) \rightharpoonup b_1)a_3b_3)\\
&\stackrel{\eqref{eq:MP5}}{=}&
(\pi(a_1) \leftharpoonup b_1) \otimes (S_K(\pi(a_2) \rightharpoonup b_2)a_3b_3)\\
&\stackrel{\eqref{eq:pi-r-action}}{=}& (\pi(a_1) \leftharpoonup b_1) \otimes (a_2\leftharpoonup_\pi b_2).
\end{eqnarray*}
Namely, $(K,\leftharpoonup_\pi,(\pi\otimes\id_K)\Delta_K)$ is a $(K,K,H)$-Doi-Hopf module. Also, we have
$$(\id_K\otimes\pi)((\pi\otimes\id_K)\Delta_K(a))=(\pi\otimes\pi)\Delta_K(a)=\Delta_H(\pi(a)).$$
Hence, the map $\pi:K\to H$ is a morphism of $(K,K,H)$-Doi-Hopf modules.
\end{proof}

According to Theorem~\ref{thm:rrb-dcp} and Theorem~\ref{thm:doi-hopf}, we
have the following reformulation.
\begin{coro}\label{coro:doi-hopf}
Given a relative Rota-Baxter operator $\pi$ on $H$ with respect to $(L,\rightharpoonup)$, the map $\pi:L\to H$ is a morphism
of right-left $(L_{[\pi]},L_{[\pi]},H)$-Doi-Hopf modules.
\end{coro}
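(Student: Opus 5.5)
The plan is to reduce Corollary~\ref{coro:doi-hopf} to Theorem~\ref{thm:doi-hopf}, applied to the matched pair produced by Theorem~\ref{thm:rrb-dcp}. Starting from the relative Rota-Baxter operator $\pi$ on $H$ with respect to $(L,\rightharpoonup)$, Theorem~\ref{thm:rrb-dcp} gives an ordinary Hopf algebra $L_{[\pi]}$ and a matched pair $(H,L_{[\pi]},\rightharpoonup,\leftharpoonup)$, with $\leftharpoonup$ given by Eq.~\eqref{eq:rrb_action}. Moreover, $L_{[\pi]}\bowtie H$ is a double cross product with projection $\theta=\mu_H(\pi\otimes\id_H)$.

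First I check that $\pi:L_{[\pi]}\to H$ satisfies the hypotheses of Theorem~\ref{thm:doi-hopf}. It is a Hopf algebra homomorphism: it is a coalgebra map by assumption, multiplicative for $\circ_\pi$ by Eqs.~\eqref{eq:rrb_hopf} and \eqref{eq:rrb-prod}, and unital since $\pi(1_L)=1_H$, as shown in the proof of Theorem~\ref{thm:rrb-dcp}. It satisfies Eq.~\eqref{eq:pi-MP*}: multiplying Eq.~\eqref{eq:rrb_action} on the left by $\pi(x_1\rightharpoonup a_1)$ and using that $\rightharpoonup$ is a coalgebra map gives
\[
\pi(x_1\rightharpoonup a_1)(x_2\leftharpoonup a_2)=x\pi(a).
\]
Alternatively, this follows from Proposition~\ref{prop:dcp-hopf-map} applied to the projection $\theta$, since $\theta\iota_{L_{[\pi]}}=\pi$.

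Next I apply Theorem~\ref{thm:doi-hopf} with $K=L_{[\pi]}$. This shows that $\pi$ is a morphism of $(L_{[\pi]},L_{[\pi]},H)$-Doi-Hopf modules. The source is $L$ with the right action $\leftharpoonup_\pi$ of Eq.~\eqref{eq:pi-r-action}, formed with the Hopf structure of $L_{[\pi]}$, and the coaction $(\pi\otimes\id)\Delta_L$. The target is $H$ with $\leftharpoonup$ and $\Delta_H$.

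The only point requiring care is the bookkeeping in that last step. In Eq.~\eqref{eq:pi-r-action}, the product must be $\circ_\pi$ and the antipode must be $T_\pi$, not the braided structure of $L$. So the action on $L$ reads
\[
a\leftharpoonup_\pi b=T_\pi(\pi(a_1)\rightharpoonup b_1)\circ_\pi a_2\circ_\pi b_2 .
\]
The coalgebra of $L_{[\pi]}$ equals that of $L$, so the coaction is unchanged. No further computation is needed.
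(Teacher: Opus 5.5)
Your proposal is correct and follows the paper's own argument: the paper likewise obtains the corollary by taking the matched pair $(H,L_{[\pi]},\rightharpoonup,\leftharpoonup)$ and the Hopf algebra homomorphism $\pi$ produced by Theorem~\ref{thm:rrb-dcp}, and applying Theorem~\ref{thm:doi-hopf} with $K=L_{[\pi]}$. Your explicit derivation of Eq.~\eqref{eq:pi-MP*} from Eq.~\eqref{eq:rrb_action}, and your note that $\leftharpoonup_\pi$ must be built from $\circ_\pi$ and $T_\pi$, spell out details the paper leaves implicit.
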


In the end, we give the following example from matched pairs of Lie algebras.
\begin{exam}
Let $(\g,[\cdot,\cdot]_\g)$ and $(\h,[\cdot,\cdot]_\h)$ be two Lie algebras such that $(\g,\rhd)$ is a representation of $\h$ and $(\h,\btl)$ is a representation of $\g$. According to~\cite[Theorem~4.1]{Ma3}, the direct-sum space $\g\oplus \h$ has the following Lie bracket
$$[a+x,b+y]=[a,b]_\g+x\rhd b-y\rhd a + a\btl y - b\btl x +[x,y]_\h,\quad\forall a,b\in\g,\ x,y\in\h,$$
if and only if
\begin{align*}
&x\rhd[a,b]_\g = [x\rhd a,b]_\g+[a,x\rhd b]_\g + (b\btl x)\rhd a - (a\btl x)\rhd b,\\
&a\btl[x,y]_\h = [a\btl x,y]_\h+[x,a\btl y]_\h + (y\rhd a)\btl x - (x\rhd a)\btl y
\end{align*}
for any $a,b\in\g$ and $x,y\in\h$.
In this situation, the tuple $(\h,\g,\rhd,\btl)$ is called a {\bf matched pair of Lie algebras}, and the above Lie algebra structure on $\g\oplus \h$ is called the double cross product Lie algebra and denoted by $\g\bowtie \h$.

If there exists a Lie algebra homomorphism $\pi:\g\to \h$ such that
\begin{equation}\label{eq:btl}
a \btl x = [\pi(a),x]_\h+\pi(x\rhd a),\quad\forall a\in\g,\ x\in\h,
\end{equation}
then $\theta:\g\bowtie \h\to\h,\ a+x\to \pi(a)+x$ is a Lie algebra projection. Also, we have the induced Lie algebra $\g_\pi=(\g,[\cdot,\cdot]_\pi)$ defined by
$$[a,b]_\pi\coloneqq [a,b]_\g-\pi(a)\rhd b+\pi(b)\rhd a,\quad\forall a,b\in\g$$
and the map $\rhd$ as a left $\h$-module action on $\g_\pi$, so that $\pi:\g_\pi\to\h$ is a relative Rota-Baxter operator of weight 1 on $\h$ with respect to the action $(\g_\pi,\rhd)$.

Define the right $\g$-module action $\lhd$ on $\h$ by
$$x\lhd a \coloneqq  - a \btl x,\quad\forall a\in\g,\ x\in\h.$$
By~\cite[Example~4.5]{Ma3}, a matched pair of Lie algebras $(\g,\h,\rhd,\btl)$ can extend to a matched pair of Hopf algebras
$(U(\h),U(\g),\rhd,\lhd)$, such that $U(\h)\bowtie U(\g)\cong U(\g\bowtie \h)$.

\end{exam}

In \cite[Theorem~3.6]{LST}, it was shown that relative Rota-Baxter operators on Lie algebras extend to their universal envelopes. In parallel with such a result, we have the following extension.
\begin{prop}
Given a matched pair of Lie algebras $(\h,\g,\rhd,\btl)$, a Lie algebra homomorphism $\pi:\g\to\h$ satisfying Eq.~\eqref{eq:btl} extends to a Hopf algebra homomorphism $\Pi:U(\g)\to U(\h)$  satisfying
Eq.~\eqref{eq:pi-MP*}, which accordingly is a morphism
of $(U(\g),U(\g),U(\h))$-Doi-Hopf modules as in Theorem~\ref{thm:doi-hopf}.
\end{prop}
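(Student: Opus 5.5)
The plan is to extend $\pi$ to $\Pi:U(\g)\to U(\h)$ by the universal property, and then to verify Eq.~\eqref{eq:pi-MP*} for $\Pi$. This verification is carried out inside the enveloping algebra of the double cross product, not by a direct inductive computation. The last assertion (the Doi-Hopf morphism) then follows immediately from Theorem~\ref{thm:doi-hopf}.

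\textbf{Step 1: the homomorphisms.} Since $\pi:\g\to\h\subset U(\h)$ is a Lie algebra homomorphism, the universal property of $U(\g)$ gives a unique algebra map $\Pi:U(\g)\to U(\h)$ extending $\pi$. It is a Hopf algebra homomorphism: both $\Delta_{U(\h)}\Pi$ and $(\Pi\otimes\Pi)\Delta_{U(\g)}$ are algebra maps agreeing on the primitive generators $a\in\g$, and similarly for $\varepsilon$ and the antipode.

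Next, the statement preceding the proposition shows that Eq.~\eqref{eq:btl} makes $\theta:\g\bowtie\h\to\h$, $a+x\mapsto\pi(a)+x$, a Lie algebra homomorphism with $\theta|_\h=\id_\h$. Applying $U$ gives a Hopf algebra homomorphism
$$U(\theta):U(\g\bowtie\h)\to U(\h), \qquad U(\theta)\iota_{U(\h)}=\id .$$

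\textbf{Step 2: transporting the projection.} By~\cite[Example~4.5]{Ma3} there is a Hopf algebra isomorphism between the double cross product of the matched pair $(U(\h),U(\g),\rhd,\lhd)$ and $U(\g\bowtie\h)$. This isomorphism is induced by multiplication $U(\g)\otimes U(\h)\to U(\g\bowtie\h)$ and restricts to the canonical inclusions of $U(\g)$ and $U(\h)$.

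Composing with $U(\theta)$ gives a Hopf algebra projection of $U(\g)\bowtie U(\h)$ onto $U(\h)$ that is the identity on $U(\h)$. Its restriction to $U(\g)$ is $U(\theta)|_{U(\g)}=U(\pi)=\Pi$. The necessity direction of Proposition~\ref{prop:dcp-hopf-map} then shows that $\Pi$ satisfies Eq.~\eqref{eq:pi-MP*} for the extended actions.

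\textbf{Main obstacle.} The delicate point is matching conventions. One must check that the Hopf-level actions $\rhd$ and $\lhd$ (with $x\lhd a=-a\btl x$) coincide with those that Proposition~\ref{prop:MP_Hopf} extracts from the factorization $U(\g\bowtie\h)=U(\g)U(\h)$. Concretely, for $x\in\h$ and $a\in\g$, one needs the product $xa$ in $U(\g\bowtie\h)$ to equal
$$(x_1\rhd a_1)(x_2\lhd a_2)=a\,x+x\rhd a+x\lhd a .$$
This amounts to $[x,a]=x\rhd a-a\btl x$, which is exactly the given bracket. On generators the convention check is therefore a direct comparison, and the generators determine everything via Eqs.~\eqref{eq:MP1}--\eqref{eq:MP4}.

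If one preferred to avoid this identification, a fallback is a direct induction on PBW degree. Eq.~\eqref{eq:pi-MP*} on generators $x\in\h$, $a\in\g$ reduces to $x\pi(a)=\pi(x\rhd a)+\pi(a)x+(x\lhd a)$, which is exactly Eq.~\eqref{eq:btl}. One would then propagate this identity to products using Eqs.~\eqref{eq:MP1} and \eqref{eq:MP3} together with multiplicativity of $\Pi$. The factorization argument above avoids that bookkeeping.
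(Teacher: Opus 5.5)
Your proof is correct, but its main route differs from the paper's; the fallback you sketch at the end is essentially the paper's proof. The paper never uses $U(\g\bowtie\h)$ or the Lie projection $\theta$. It proceeds in three steps:
\begin{enumerate}[(i)]
\item It rewrites \eqref{eq:btl} as the identity $\pi(x\rhd a)+x\lhd a+\pi(a)x=x\pi(a)$ in $U(\h)$.
\item It proves $\Pi(x\rhd A)+\Pi(A_1)(x\lhd A_2)=x\Pi(A)$ for $x\in\h$ by induction on the length of $A\in U(\g)$, using \eqref{eq:MP1}.
\item It extends this to all $X\in U(\h)$ by induction on the length of $X$, using \eqref{eq:MP3}.
\end{enumerate}

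You instead observe that \eqref{eq:btl} is exactly the condition making $\theta$ a Lie homomorphism. You lift $\theta$ functorially to $U(\theta)$ and read off \eqref{eq:pi-MP*} from the necessity half of Proposition~\ref{prop:dcp-hopf-map}. This makes transparent that the statement is just ``$U$ turns Lie projections into Hopf projections''.

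The cost is identifying $(\rhd,\lhd)$ with the actions extracted from the PBW factorization $U(\g\bowtie\h)=U(\g)U(\h)$. Your generator check $xa=ax+x\rhd a+x\lhd a$ is right. Your uniqueness claim also holds, using the module axioms as well as \eqref{eq:MP1}--\eqref{eq:MP4}:
\begin{itemize}
\item For $x\in\h$, the relations $x\rhd(aA)=(x\rhd a)A+a(x\rhd A)+(x\lhd a)\rhd A$ and $x\lhd(aA)=(x\lhd a)\lhd A$ determine everything recursively.
\item For general $X$, \eqref{eq:MP3} together with $xX\rhd A=x\rhd(X\rhd A)$ does the rest.
\end{itemize}
However, that uniqueness argument is itself a length induction of the same shape as the paper's. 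So the bookkeeping is relocated rather than eliminated, unless you take Majid's extension to be defined through the factorization in the first place.

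Conversely, the paper's argument uses only the matched-pair axioms of $(U(\h),U(\g),\rhd,\lhd)$ and the generator identity. It never needs the isomorphism with $U(\g\bowtie\h)$ or the fact that $\theta$ is a Lie homomorphism.
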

\begin{proof}
By the universal property of $\U(\g)$, we clearly have the unique Hopf algebra homomorphism $\Pi:U(\g)\to U(\h)$ such that $\Pi|_\g=\pi$. It remains to check that $\Pi$ satisfies Eq.~\eqref{eq:pi-MP*}, which can be done by induction on the length of elements in the universal envelopes.

First by definition, Eq.~\eqref{eq:btl} is rewritten as
$$x\lhd a= [x,\pi(a)]_\h-\pi(x\rhd a),\quad\forall a\in\g,\ x\in\h.$$
Namely, we have the following identity in $U(\h)$,
$$\pi(x\rhd a)+x\lhd a +\pi(a)x=x\pi(a),\quad\forall a\in\g,\ x\in\h.\eqno{(*)}$$
so Eq.~\eqref{eq:pi-MP*} holds for all elements of length $\leq 1$ in the universal envelopes. For any $A\in U(\g)$,
\begin{align*}
&\,\,\quad\Pi(x\rhd aA)+\Pi(aA_1)(x\lhd A_2)+\Pi(A_1)(x\lhd aA_2)\\
&\stackrel{\eqref{eq:MP1}}{=} \pi(x\rhd a)\Pi(A)+\pi(a)\Pi(x\rhd A) +\Pi((x\lhd a)\rhd A)\\
&\qquad +\pi(a)\Pi(A_1)(x\lhd A_2)+\Pi(A_1)((x\lhd a)\lhd A_2)\\
&= \pi(x\rhd a)\Pi(A)+\pi(a)x\Pi(A) +(x\lhd a)\Pi(A)\\ &\stackrel{(*)}{=}x\pi(a)\Pi(A) = x\Pi(aA),
\end{align*}
where the second equality is due to the induction hypothesis.  Next for any $X\in U(\h)$, we use the identity $\Pi(x\rhd A)+\Pi(A_1)(x\lhd A_2)=x\Pi(A)$ just proved to check that
\begin{align*}
&\,\, \quad\Pi(xX_1\rhd A_1)(X_2\lhd A_2)+\Pi(X_1\rhd A_1)(xX_2\lhd A_2)\\
&\stackrel{\eqref{eq:MP3}}{=} \Pi(x\rhd (X_1\rhd A_1))(X_2\lhd A_2)+
\Pi(X_1\rhd A_1)(x\lhd(X_2\rhd A_2))(X_3\lhd A_3)\\
&=x\Pi(X_1\rhd A_1)(X_2\lhd A_2)=xX\Pi(A),
\end{align*}
where the last equality is due to the induction hypothesis. The proof is completed.
\end{proof}

\vspace{0.1cm}
 \noindent
{\bf Acknowledgements}
This work is supported by National Natural Science Foundation of China (12071094, 12171155), and Basic and Applied Basic Research Foundation of Guangdong Province (2026A1515012750).

\bibliographystyle{amsplain}

\begin{thebibliography}{99}

\bibitem{AS} N. Andruskiewitsch and H.-J. Schneider, Pointed Hopf algebras. In Recent developments in Hopf algebras Theory, MSRI Publ. {\bf 43}
(2002), 1--68, Cambridge Univ. Press.

\bibitem{AGV} I. Angiono, C. Galindo and L. Vendramin, Hopf braces and Yang-Baxter operators, \textit{Proc. Amer. Math. Soc.} {\bf 145} (2017), 1981--1995.


\bibitem{BB} M. Beattie and D. Bulacu, Braided Hopf algebras obtained from coquasitriangular Hopf algebras, \textit{Comm. Math. Phys.} {\bf 282} (2008), 115--160.

\bibitem{Doi} Y. Doi, Unifying Hopf modules, J. Algebra, {\bf 153} (1992), 373--385.

\bibitem{DT} Y. Doi and M. Takeuchi, Multiplication alteration by two-cocycles\ -\ the quantum version, {\it Comm. Algebra} {\bf 22} (1994), 5715--5732.

\bibitem{D1} V. G. Drinfel'd, On almost cocommutative Hopf algebras, {\it Leningrad Math. J.} {\bf 1} (1990), 321--342.

\bibitem{ELM} K. Ebrahimi-Fard, A. Lundervold and H. Munthe-Kaas, On the Lie enveloping algebra of a post-Lie algebra, \textit{J. Lie Theory} {\bf 25} (2015), 1139--1165.

\bibitem{VRP}
J. M. Fern\'{a}ndez Vilaboa, R. Gonz\'{a}lez Rodr\'{\i}guez, B. Ramos P\'{e}rez, Twisted post-Hopf algebras, twisted relative Rota-Baxter operators and Hopf trusses, {\it SIGMA} {\bf 21} (2025), Page No. 024, 36 pp.

\bibitem{VRP1}
J. M. Fern\'{a}ndez Vilaboa, R. Gonz\'{a}lez Rodr\'{\i}guez, B. Ramos P\'{e}rez,
Relative Rota-Baxter operators, modules and projections,
{\it Czech. Math. J.} {\bf 75} (2025), 865--913.

\bibitem{FS}
D. Ferri and A. Sciandra, Matched pairs and Yetter-Drinfeld braces, \textit{Cand. J. Math.} (2025), first view; arXiv:2406.10009.


\bibitem{Go} M. Goncharov, Rota-Baxter operators on cocommutative Hopf algebras, \textit{J. Algebra} {\bf 582} (2021), 39--56.


\bibitem{GGV}
J. A. Guccione, J. J. Guccione and L. Vendramin, Yang-Baxter operators in symmetric categories, \textit{Comm. Algebra} {\bf46} (2018), 2811--2845.

\bibitem{GGV0}
J. A. Guccione, J. J. Guccione and C. Valqui, Set-theoretic type solutions of the braid equation, {\it J. Algebra} {\bf 644} (2024), 461--525.

\bibitem{GGV1}
J. A. Guccione, J. J. Guccione and C. Valqui, Set-theoretic type solutions of the braid equation, arXiv:2008.13494 (v5).

\bibitem{GLS} L. Guo, H. Lang and Y. Sheng, Integration and geometrization of Rota-Baxter Lie algebras, \textit{Adv. Math.} {\bf 387} (2021), 107834.

\bibitem{HN}
E. Habbestad and S. Neshveyev, Cocycle twisting of semidirect products and transmutation, {\it Int. Math. Res. Not.} {\bf 2024} (2024), 9142--9164.


\bibitem{JSZ}
J. Jiang, Y. Sheng and C. Zhu,
Lie theory and cohomology of relative Rota-Baxter operators,
\emph{J. Lond. Math. Soc.} \textbf{109} (2024), e12863.

\bibitem{Ku}
B. A. Kupershmidt, What a classical $r$-matrix really is, \emph{J. Nonlinear Math. Phys.} {\bf 6} (1999), 448--488.


\bibitem{Li0} Y. Li, Matched pairs and Yang-Baxter operators, arXiv:2501.11975v2.

\bibitem{Li} Y. Li, Matched pairs and Yang-Baxter operators, {\it Forum Math.} (2026), 13 pp, 
    https://doi.org/10.1515/forum-2025-0274.

\bibitem{LST} Y. Li, Y. Sheng and R. Tang, Post-Hopf algebras, relative Rota-Baxter operators and solutions to the Yang-Baxter equation, \textit{J. Noncommut. Geom.} {\bf 18} (2024), 605--630.


\bibitem{LYZ} J. Lu, M. Yan and Y. Zhu,   On the set-theoretical Yang-Baxter equation, \textit{Duke Math. J.} {\bf 104} (2000),   1--18.

\bibitem{Ma0} S. Majid, Crossed products by braided groups and bosonization, \textit{J. Algebra} {\bf 163} (1994), 165--190.

\bibitem{Ma1} S. Majid, Braided groups, {\it J. Pure Appl. Algebra} {\bf 86} (1993), 187--221.

\bibitem{Ma2} S. Majid, Doubles of quasitriangular Hopf algebras, \textit{Comm. Algebra} {\bf 19} (1991), 3061--3073.

\bibitem{Ma3} S. Majid, Physics for algebraists: Non-commutative and non-cocommutative Hopf algebras by a bicrossproduct construction, \textit{J. Algebra} {\bf 130} (1990), 17--64.

\bibitem{Maj1} S. Majid, A quantum groups primer, London Mathematical Society Lecture Note Series {\bf 292}, Cambridge University Press, Cambridge, 2002.

\bibitem{Ma} S. Majid, Foundations of quantum group theory, Cambridge University Press, 1995.

\bibitem{Mon} S. Montgomery, Hopf algebras and their actions on rings, Amer. Math. Soc., Regional Conf. Ser. in Math., \textbf{82}, 1993.

\bibitem{Ra} D. Radford, The structure of Hopf algebras with a projection, \textit{J. Algebra} {\bf 92} (1985), 322--347.


\bibitem{Sc}
A. Sciandra, Yetter-Drinfeld post-Hopf algebras and Yetter-Drinfeld relative Rota-Baxter operators, \textit{J. Noncommut. Geom.} {\bf 20} (2026), 995--1028.


\bibitem{STS}
M. Semenov-Tian-Shansky, What is a classical $r$-matrix? \emph{Funct. Anal. Appl.} {\bf 17} (1983), 259--272.

\bibitem{Ta} M. Takeuchi, Matched pairs of groups and bismash products of Hopf algebras, \textit{Comm. Algebra}  {\bf 9} (1981), 841--882.

\bibitem{TS} R. Tang and Y. Sheng, Leibniz bialgebras, relative Rota-Baxter operators and the classical Leibniz Yang-Baxter equation, {\it J. Noncommut. Geom.} {\bf 16} (2022), 1179--1211.

\bibitem{Uc} K. Uchino, Quantum analogy of Poisson geometry, related dendriform algebras and Rota-Baxter operators, {\it Lett. Math. Phys.} {\bf 85} (2008), 91--109.

\bibitem{Zh}
H. Zhu, Relative Yetter-Drinfeld modules and comodules over braided groups, {\it J. Math. Phys.} {\bf 56} (2015), 041706.

\bibitem{ZD}
H. Zhu and Y. Di, Cartier-Gabriel-Kostant theorem for relative Rota-Baxter operators, {\it J. Algebra} {\bf 685} (2026), 775--800.

\end{thebibliography}

\end{document}